\numberwithin{equation}{section} 
\numberwithin{figure}{section} 
\newtheorem{theorem}[equation]{Theorem}
\newtheorem{corollary}[equation]{Corollary}
\newtheorem{lemma}[equation]{Lemma}
\newtheorem{prop}[equation]{Proposition}
\theoremstyle{definition}
\newtheorem{remark}[equation]{Remark}
\newtheorem{example}[equation]{Example}
\theoremstyle{definition}
\newtheorem{definition}[equation]{Definition}
\DeclareMathOperator{\Span}{Span}
\DeclareMathOperator{\codim}{codim}
\DeclareMathOperator{\supp}{supp}
\DeclareMathOperator{\Vol}{Vol}
\DeclareMathOperator{\st}{st}
\DeclareMathOperator{\lk}{lk}
\DeclareMathOperator{\Sym}{Sym}
\DeclareMathOperator{\PP}{PP}
\DeclareMathOperator{\Frac}{Frac}
\newcommand{\R}{\mathbb{R}}
\renewcommand{\v}{\mathbf{v}}
\newcommand{\Z}{{\mathbb{Z}}}
\begin{document}

\title{Anisotropy and the $g$-theorem for simplicial spheres}
\markright{The $g$-theorem for simplicial spheres}
\author{Eric Katz}

\begin{abstract}
We give an exposition of the proof of the lower bound part of the $g$-theorem for simplicial  spheres by Adiprasito, Papadakis, and Petrotou.
\end{abstract}

\maketitle

\noindent

\section{Introduction}

A proof of the lower bound part of the $g$-theorem for simplicial homology spheres over fields of characteristic $2$ was given by Papadakis and Petrotou \cite{PP} following the announcement of a proof by Adiprasito \cite{Adiprasito:beyondpositivity}. This proof was simplified in work of Adiprasito--Papadakis--Petrotou \cite{APP}, and we feel it should be better known. We give a self-contained presentation requiring only basic facts about Cohen--Macaulay and Gorenstein complexes that are well known to the combinatorial commutative algebra community. A reader willing to take those facts on faith should be able to understand the proof.

The $g$-theorem, originally the $g$-conjecture of McMullen \cite{McMullen}, characterized the number of faces of a $d$-dimensional simplicial convex polytope $P$. For background on convex polytopes, we recommend \cite{Grunbaum, Ziegler}. Let $f_i$ be the number of $i$-dimensional faces of $P$ where $f_{-1}=1$ corresponds to the empty face. 
Define the $h$-polynomial 
\[h(t)=\sum_{i=0}^d h_it^i\coloneqq \sum_{i=0}^d f_{i-1}(t-1)^{d-i}.\]
By the Dehn--Sommerville relations, the polynomial $h(t)$ is palindromic, i.e., for all $i$, $h_i=h_{d-i}$. As part of the $g$-conjecture, 
McMullen conjectured that the sequence $\{h_i\}$ is unimodal, that is,
\[h_0\leq h_1\leq \dots \leq h_{\lfloor d/2\rfloor}.\]
This conjecture, called the lower bound conjecture and which we will focus on in this note, was resolved by Stanley \cite{Stanley:number} who applied a deep theorem in algebraic geometry, the Hard Lefschetz theorem for intersection homology to toric varieties. A combinatorial proof by McMullen \cite{McMullen:simple} followed. The theorem was extended to more general convex polytopes by Karu \cite{Karu:HL}. The other parts of the $g$-conjecture (i.e., the upper bound conjecture and sufficiency) were settled by McMullen \cite{McMullen} and Billera--Lee \cite{BilleraLee}

One can ask whether the $g$-theorem generalizes from simplicial convex polytopes to simplicial spheres. Here, instead of focusing on convex polytopes, one considers their boundaries which are simplicial spheres, that is, simplicial complexes homeomorphic spheres. Combinatorially, there are many more simplicial spheres than there are boundaries of polytopes \cite{Kalai:many}, so this generalization is significant. The upper bound conjecture was settled by Stanley for simplicial spheres \cite{Stanley:greenbook,Stanley:UBT} using combinatorial commutative algebra techniques.

For the lower bound conjecture, One can try to imitate Stanley's toric geometry proof by developing combinatorial stand-ins for homology groups. Attached to a $(d-1)$-dimensional simplicial complex $\Sigma$ is the Stanley--Reisner (alt., face) ring $K[\Sigma]$, a $K$-algebra for any field $K$ (see \cite{BrunsHerzog,MS:cca,Stanley:greenbook} for background).  When $\Sigma$ is a simplicial sphere (or more generally a Cohen--Macaulay complex), $K[\Sigma]$ has a linear system of parameters $\ell_1,\dots,\ell_d\in K[\Sigma]^1$, i.e., a regular sequence in the degree $1$ part, $K[\Sigma]^1$ of $K[\Sigma]$. The quotient (often called the Artinian reduction) 
\[A^*(\Sigma)\coloneqq K[\Sigma]/(\ell_1,\dots,\ell_d)\]
is a graded ring  that behaves as if it were the cohomology of a smooth toric variety (with halved grading). Indeed, there is a volume map analogous to the degree map on top-dimensional cohomology,
\[\Vol\colon A^d(\Sigma)\to K,\]
and $A^*(\Sigma)$ obeys Poincar\'{e} duality. Moreover, the ranks of its graded parts correspond to the coefficients of $h(t)$, that is, 
$h_i=\dim A^i(\Sigma)$.

The main result of \cite{APP,PP} that we will discuss here is the lower bound part of the $g$-theorem for simplicial homology spheres
\begin{theorem}
    Let $\Sigma$ be a simplicial homology sphere over a field of characteristic $2$. Then,
    \[h_0\leq h_1\leq\dots\leq h_{\lfloor d/2\rfloor}.\]
\end{theorem}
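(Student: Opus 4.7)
The plan is to deduce the inequalities $h_{i-1} \le h_i$ for $i \le \lfloor d/2\rfloor$ from a \emph{hard Lefschetz} property for the Artinian reduction $A^*(\Sigma)$, which, in characteristic $2$, reduces to an \emph{anisotropy} statement for a quadratic form on each $A^i(\Sigma)$.

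First I would use the given fact that $A^*(\Sigma)$ satisfies Poincar\'e duality with socle in degree $d$ and that $h_i = \dim A^i(\Sigma)$; in particular $h_i = h_{d-i}$. The desired unimodality then follows from hard Lefschetz: there exists $\ell \in A^1(\Sigma)$ such that $\cdot\ell^{d-2j}\colon A^j(\Sigma) \to A^{d-j}(\Sigma)$ is an isomorphism for each $j \le d/2$. Indeed, taking $j = i-1$, the composition $A^{i-1} \xrightarrow{\cdot\ell} A^i \xrightarrow{\cdot\ell^{d-2i+1}} A^{d-i+1}$ is an isomorphism, so the first factor $\cdot\ell\colon A^{i-1} \to A^i$ is injective, and $h_{i-1}\le h_i$.

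Next I reduce hard Lefschetz to anisotropy. Choose $\ell_1,\dots,\ell_d$ with indeterminate coefficients over $\F_2$, work over the resulting field of fractions $K$, and take $\ell$ analogously. Under the Poincar\'e pairing, the map $\cdot\ell^{d-2j}\colon A^j \to A^{d-j}$ is adjoint to the symmetric bilinear form $B_\ell(x,y) = \Vol(\ell^{d-2j}xy)$ on $A^j(\Sigma)$, and because $h_j = h_{d-j}$ this map is an isomorphism iff $B_\ell$ is non-degenerate. In characteristic $2$ the associated quadratic form $Q_\ell(x) = \Vol(\ell^{d-2j}x^2)$ is additive, since $(x+y)^2 = x^2 + y^2$, so it suffices to prove the \emph{anisotropy} statement
\[
Q_\ell(x) = 0 \ \Longrightarrow\ x = 0, \qquad \text{for all } x \in A^j(\Sigma),\ 2j\le d,
\]
because any $x$ in the radical of $B_\ell$ then satisfies $Q_\ell(x) = B_\ell(x,x) = 0$ and hence vanishes.

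The heart of the argument is establishing anisotropy. My plan is to derive an explicit closed formula, as a rational function in the indeterminate coefficients of the l.s.o.p., for $\Vol$ applied to monomials in $A^*(\Sigma)$---a formula one can read off from the Macaulay inverse system of the Stanley--Reisner ring---and then to analyze $Q_\ell$ by applying suitable differential operators in these indeterminates. The expected inductive scheme uses that the link $\lk_\Sigma(v)$ of a vertex $v$ is again a simplicial homology sphere of one lower dimension and that, in the volume formula, differentiation in the coefficients attached to $v$ corresponds to passing to the Artinian reduction of $K[\lk_\Sigma(v)]$. The main obstacle I anticipate is precisely this step: finding the right combinatorial formula for $\Vol$ and checking that the chosen derivations send a hypothetical isotropic $x \in A^j(\Sigma)$ to an isotropic element in the Artinian reduction of the link, so that induction on $d$ closes the argument. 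Working in characteristic $2$ is essential here, as only then does squaring commute with addition, which is what allows both sides of these identities to be manipulated linearly.
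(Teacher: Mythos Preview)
Your high-level framing---generic linear system of parameters over $\F_2$, reduction of the Lefschetz property to an anisotropy statement, and the use of differential operators in the indeterminate coefficients, all exploiting that squaring is additive in characteristic $2$---is exactly the philosophy of the paper. The reduction ``anisotropy of $Q_\ell$ implies nondegeneracy of $B_\ell$'' is also correct.

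However, the paper's execution differs from your plan in two substantial ways. First, the paper does \emph{not} prove anisotropy of $Q_\ell(x)=\Vol(\ell^{d-2j}x^2)$ for all $j$ directly. It proves anisotropy only in the middle degree: for $a\in A^{\lfloor d/2\rfloor}(\Sigma)$ (resp.\ $A^{(d-1)/2}$ when $d$ is odd), $a\neq 0$ implies $a^2\neq 0$. There is no $\ell$ in this statement. To produce a Lefschetz element the paper instead suspends: it passes to $S\Sigma$, takes $\ell$ to be the image of $x_{v_+}$ under $A^*(\st_{S\Sigma}(v_+))\cong A^*(\Sigma)$, and shows that if $x_{v_+}a=0$ then $\iota(a)^2=x_{v_+}^2\tilde a^2=0$ in $A^*(S\Sigma)$, whence $\iota(a)=0$ by middle-degree anisotropy of $S\Sigma$. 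This gives weak Lefschetz; strong Lefschetz is then bootstrapped from weak Lefschetz plus anisotropy.

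Second, and more importantly, the paper's anisotropy proof is \emph{not} an induction on $d$ via links. It is a direct argument: a displacement lemma rewrites any $a\in A^{d/2}$ as a sum over faces $\eta$ disjoint from a chosen $\gamma$, with a single $\eta=\tau$ lying in $\lk_\Sigma(\gamma)$; then a one-line computation shows the identity $\partial_\gamma^\tau\Vol(x_\tau^2)=\pm[\sigma]\Vol(x_\tau x_\gamma)^2$ and $\partial_\gamma^\tau\Vol(x_\eta^2)=0$ for the other $\eta$. In characteristic $2$ the cross terms in $\Vol(a^2)$ vanish, so $\partial_\gamma^\tau\Vol(a^2)=[\sigma]\Vol(ax_\gamma)^2$, and Poincar\'e duality supplies a $\gamma$ with $ax_\gamma\neq 0$. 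No inductive hypothesis on lower-dimensional spheres is invoked.

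Your proposed inductive scheme (derivations at a vertex $v$ transporting isotropic elements to the Artinian reduction of $\lk_\Sigma(v)$) is not obviously wrong, and something in this spirit appears elsewhere in the literature, but as you yourself flag, making that transport precise is the entire difficulty, and you have not supplied it. The paper sidesteps this by the displacement-plus-main-identity argument and the suspension trick, both of which are absent from your sketch.
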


Recall that for simplicial polytopes, this is a consequence of the Hard Lefschetz theorem which asserts that there is a {\em Lefschetz element} $\ell\in A^1(\Sigma)$, i.e., for any $i$ with $0\leq i\leq d/2$, multiplication by $\ell^{d-2i}$,
\[\ell^{d-2i}\cdot\colon A^i(\Sigma)\to A^{d-i}(\Sigma)\]
is an isomorphism. Because this implies that $\ell\cdot\colon A^i(\Sigma)\to A^{i+1}(\Sigma)$ is injective for $i<d/2$, it gives $h_i\leq h_{i+1}$. An element that satisfies this injectivity property is called a {\em weak Lefschetz element}.
McMullen's proof fits into the framework of Lefschetz theorems: he showed the existence of a (strong) Lefschetz element for convex simplicial polytopes. His proof used convexity in an essential way; this convexity corresponds to a property called positivity in algebraic geometry \cite{Lazarsfeld1}.

Once one has a single Lefschetz element, by a genericity argument, one can conclude that most elements of $A^1(\Sigma)$ are Lefschetz elements. In fact, the non-Lefschetz elements are a Zariski closed set, that is, they are cut out by finitely-many polynomial equations. This suggests that for simplicial spheres, one should hunt for a generic Lefschetz element as no notion of positivity may be possible. Indeed, this is what Adiprasito did in \cite{Adiprasito:beyondpositivity}. There are a series of reduction steps. If one sets $e=\lfloor (d-1)/2\rfloor$, one needs only show
\[\ell\cdot\colon A^e(\Sigma)\to A^{e+1}(\Sigma)\]
is injective. A major insight of \cite{PP} was that it suffices to show that for any nonzero $u\in A^e(\Sigma)$, $u^2\neq 0$. In the case where $d$ is odd, one need only show $\Vol(u^2)\neq 0$. This property is called {\em anisotropy}. There is an analogous condition for $d$ even.

The proof of Papadakis and Petrotou \cite{PP} established anisotropy by means of differential operators in characteristic $2$. Here, the philosophy is informed by polytopes. We describe the situation for odd-dimensional simplicial spheres. The choice of the linear system of parameters $\{\ell_1,\dots,\ell_d\}$ corresponds to a choice of coordinates for the vertices of the simplicial sphere. The volume function $u\mapsto \Vol(u^2)$ is a very complicated expression involving the coordinates. To prove that it does not vanish, one makes the maximally generic choice: the vertex $v$ should be placed at the point $(a_{v1},a_{v2},\dots,a_{vd})\in K^d$ where the set of coordinates $\{a_{vj}\}$ is an algebraically independent set. This forces us to work over the field $K=k(a_{vj})$ where $k$ is some field. To show  the nonvanishing of $\Vol(u^2)$, Papadakis and Petrotou prove the nonvanishing of the derivative of $\Vol(u^2)$ when one moves the vertices. The expression for the derivative $\Vol(u^2)$ is still very complicated, but it becomes much simpler in characteristic $2$. Indeed it is shown not to vanish by some fundamental identities (Proposition~\ref{p:mainidentityodd} and Proposition~\ref{p:mainidentityeven}). This, in turn, guarantees the existence of a weak Lefschetz element and thus a proof of the $g$-theorem. A related fundamental identity (and its strengthening) and its generalization to characteristic $p$ was proved using invariant theory by Karu--Xiao \cite{KX} and Karu--Larson--Stapledon \cite{KLS}, respectively.

We would like to acknowledge the work that informed ours. This note is a report on the paper of Adiprasito--Papadakis--Petrotou \cite{APP}, and we make no claims of originality. 
Our exposition owes much to the work of Karu--Xiao \cite{KX}.
Many of the important geometric ideas in this paper are inspired by the work of Ed Swartz, and we enthusiastically recommend \cite{Swartz}. We would like to thank Karim Adiprasito, Matt Larson and Alan Stapledon for helpful comments about an earlier draft of this manuscript.

\section{Simplicial complexes and Stanley--Reisner rings}
\begin{definition}
A {\em simplicial} complex $\Sigma$ on a finite set $V$ is a non-empty collection of subsets of $V$ that is closed under taking subsets: if $\sigma\in \Sigma$ and $\tau\subseteq \sigma$, then $\tau\in \Sigma$. In particular $\varnothing\in\Sigma$.
\end{definition}

Attached to $\Sigma$ is a topological space $|\Sigma|$, its {\em geometric realization}: let $\Delta_{|V|-1}$ be the standard simplex on $V$, i.e., the subset of $\R^V$ given by 
\[\Delta_{|V|-1}\coloneqq \left\{(t_v)\in\R^V \Big| \sum_v t_v=1\right\};\]
set $|\Sigma|$ to be the subset of $\Delta_{|V|-1}$ given by faces corresponding to elements of $\Sigma$. Thus, to $\sigma\in \Sigma$ corresponds the simplex $|\sigma|$ given by
\[|\sigma|\coloneqq \{(t_v)\in \Delta_{|V|-1}\mid t_v=0 \text{ for } v\not\in\sigma\}.\]
Motivated by the geometric realization, we define $\Sigma_{(k)}$ to be the {\em set of all $k$-dimensional faces of $\Sigma$}, i.e., the set of all $(k+1)$-element members of $\Sigma$. In particular, the empty face $\varnothing$ is $(-1)$-dimensional. We say that $\Sigma$ is purely $(d-1)$-dimensional if all maximal faces are $(d-1)$-dimensional.

For a face $\sigma\in\Sigma$, we define the star and link of $\sigma$ to be
\begin{align*}
\st_{\Sigma}(\sigma)&\coloneqq \{\tau\in\Sigma\mid \sigma\cup\tau\in\Sigma\}\\
\lk_{\Sigma}(\sigma)&\coloneqq \{\tau\in \Sigma\mid \sigma\cup\tau\in \Sigma,\ \sigma\cap\tau=\varnothing\}.
\end{align*}

For a simplicial complex $\Sigma$, and $v$ an element not in $\Sigma_{(0)}$, the {\em cone over $\Sigma$ with apex $v$} is the simplicial complex
\[v\Sigma\coloneqq \Sigma\cup \{\sigma\cup\{v\}\mid \sigma\in \Sigma\}.\]
For $v_+,v_-$, elements not in $\Sigma_{(0)}$, the {\em suspension with poles $v_+$ and $v_-$} is the
simplicial complex
\[S\Sigma\coloneqq \Sigma\cup \{\sigma\cup\{v_+\}\mid \sigma\in \Sigma\}\cup \{\sigma\cup\{v_-\}\mid \sigma\in \Sigma\}.\]

We say a purely $(d-1)$-dimensional simplicial complex is a {\em pseudomanifold without boundary} (henceforth, pseudomanifold) if
every $(d-2)$-face is contained in exactly two $(d-1)$-faces and if it is strongly connected: for any  $(d-1)$-faces $\sigma,\sigma'$, there is a {\em face path}, i.e., a sequence of $(d-1)$-faces $\sigma_0=\sigma,\sigma_1,\dots,\sigma_k=\sigma'$ such that $\sigma_i\cap\sigma_{i+1}$ is a $(d-2)$-face for $i=0,\dots,k-1$. A {\em pseudomanifold} is {\em normal} if the link of every face of codimension at least two is connected. In this case, such a link is  also a normal pseudomanifold.  For a ring $R$, a purely $(d-1)$-dimensional simplicial complex $\Sigma$ is a {\em $R$-homology manifold} if for every nonempty face $\tau\in\Sigma$, $\lk_\Sigma(\tau)$ has the same homology groups over $R$ as a $(d-2-\dim(\tau))$-dimensional sphere. If, in addition, $\Sigma$ has the same homology groups (over $R$) as a $(d-1)$-dimensional sphere, $\Sigma$ is a {\em $R$-homology sphere}. Every connected homology manifold is a normal pseudomanifold.

An {\em orientation on a $(d-1)$-face} $\sigma$ of a $(d-1)$-dimensional pseudomanifold $\Sigma$ is a choice of ordering of elements of $\sigma$ up to even permutation. We say an ordering of elements is {\em positively oriented} if it agrees with the orientation and {\em negative oriented} otherwise. An {\em orientation on the pseudomanifold} $\Sigma$ is a choice of orientation on each $(d-1)$-face such that if $\tau=\{v_0,\dots,v_{d-2}\}\in\Sigma_{d-2}$ is contained in $(d-1)$-faces $\sigma_+=\{v_0,\dots,v_{d-2},v_+\}$ and $\sigma_-=\{v_0,\dots,v_{d-2},v_-\}$, then one of $(v_0,\dots,v_{d-2},v_+)$ and $(v_0,\dots,v_{d-2},v_-)$ is positively oriented and the other negatively oriented.

Attached to $\Sigma$ is its Stanley--Reisner ring. It is the quotient of the polynomial ring on the vertex set $K[x_v\mid v\in V]$ by an ideal $I_\Sigma$. To $S\subset V$, let $x_S\coloneqq \prod_{v\in S} x_v$, and define the ideal $I_{\Sigma}$ by
\[I_\Sigma\coloneqq \left(\left\{x_S\mid S\not\in\Sigma\right\}\right).\]
The {\em Stanley--Reisner} (or face) ring of $\Sigma$ is $K[\Sigma]\coloneqq K[x_v]/I_\Sigma$.
For a nonzero monomial 
\[z=cx_{v_1}^{a_1}\dots x_{v_k}^{a_k}\in K[\Sigma]\]
with $c\in K$ and $a_i\in\Z_{\geq 0}$, we define the support of $z$ to be
\[\supp(z)\coloneqq \{v_i\mid a_i\geq 1\}.\]
We define $\supp(0)\coloneqq \varnothing$. Supports of monomials are necessarily faces of $\Sigma$.
The ring $K[\Sigma]$ is graded by the total degree of an element, and we write $K[\Sigma]^m$ to denote the $K$-vector space spanned by the elements of total degree $m$.

\begin{example} \label{e:polygon1}
  Let $p$ be a positive integer, and let $C_p$ be a $p$-gon, i.e., a cycle graph with $p$ vertices (labeled $v_1,\dots,v_p$ and considered cyclically) and $p$ edges (denoted $v_1v_2,\dots,v_{p-1}v_p,v_pv_1$).
  Then, $K[C_p]=K[x_{v_1},\dots,x_{v_p}]/I_{C_p}$ where
  \[I_{C_p}=\left(x_{v_iv_j}\mid i-j\neq -1,0,1\right)\]
\end{example}

There is functoriality for Stanley--Reisner rings. If $\Sigma'\subseteq \Sigma$ is a subcomplex with the same vertex set $\Sigma'_{(0)}=\Sigma_{(0)}$, then $I_{\Sigma}\subseteq I_{\Sigma'}$, and there is a natural quotient map $\pi_{\Sigma\to \Sigma'}\colon K[\Sigma]\to K[\Sigma']$. This makes $K[\Sigma']$ into a $K[\Sigma]$-module. If $\Sigma''$ is a subcomplex of $\Sigma'$ obtained by deleting some isolated vertices, there is a natural homomorphism $K[\Sigma']\to K[\Sigma'']$ obtained by taking $x_w$ to $0$ for each $w\in \Sigma'_{(0)}\setminus \Sigma''_{(0)}$. For any $v\in \Sigma_{(0)}$, we obtain a homomorphism $K[\Sigma]\to K[\st_\Sigma(v)]$ by composing these two homomorphisms.

If $\Sigma'\subseteq \Sigma$ is an induced subcomplex, i.e., $\sigma\in\Sigma$ is a face of $\Sigma'$ if and only if all the vertices of $\sigma$ belong to $\Sigma'$, then there is a natural homomorphism $i_{\Sigma'\to\Sigma}\colon K[\Sigma']\to K[\Sigma]$. Indeed, the inclusion $\Sigma'_{(0)}\hookrightarrow \Sigma_{(0)}$ induces an inclusion 
\[i\colon K[x_v\mid v\in \Sigma'_{(0)}]\to K[x_v\mid v\in\Sigma_{(0)}]\]
with $i(I_{\Sigma'})\subseteq I_{\Sigma}$. Because $\Sigma$, considered as a subcomplex of its cone $\v\Sigma$ is induced, it gives a ring homomorphism $i_{v}\colon K[\Sigma]\to K[v\Sigma]$.

A regular sequence in a ring $R$ is a sequence of elements $\ell_1,\dots,\ell_k$ such that the following conditions are satisfied: for all $i$, $\ell_i$ is not a zero divisor in $R/(\ell_1,\dots,\ell_{i-1})$, and $R/(\ell_1,\dots,\ell_k)\neq 0$.
For a $(d-1)$-dimensional simplicial complex $\Sigma$, a $d$-dimensional linear subspace $M\subset K[\Sigma]^{1}$ is a {\em regular parameter subspace} if it contains a regular sequence of length $d$. 

For a vertex $v\in\Sigma_{(0)}$, let $e_v\colon K[\Sigma]^{1}\to K$ be given by $\sum a_wx_w\mapsto a_v$, i.e., it takes an element to the coefficient of $v$.
Also, there is a linear isomorphism 
\begin{align*}
  K[\Sigma]^{1} &\cong K^{\Sigma_{(0)}}\\
  \sum_v a_vx_v &\mapsto [v\mapsto a_v].
\end{align*}
A linear subspace $M\subseteq K[\Sigma]^1$ induces a linear inclusion $M\hookrightarrow K^{\Sigma_{(0)}}$ with dual 
\[h\colon K\Sigma_{(0)}=(K^{\Sigma_{(0)}})^\vee\to N\coloneqq M^\vee\] 
where $K\Sigma_{(0)}$ is the vector space of $K$-linear combinations of elements of $\Sigma_{(0)}$. Intuitively, we can view $h$ as a map on the geometric realization $|\Sigma|\to N$. Indeed, one can extend the  map of vertices $h\colon \Sigma_{(0)}\to N$ linearly on faces. To make this concrete, let $\ell_1,\dots,\ell_m$ be a basis for $M$, and write
\[\ell_j=\sum_{v\in\Sigma_{(0)}} a_{vj}x_v.\]
Let $e_1,\dots,e_m$ be the basis of $N$ dual to $\ell_1,\dots,\ell_m$. Then $h\colon K\Sigma_{(0)}\to N $ is obtained by extending the following map linearly:
\[h(v)=\sum_{j=1}^m a_{vj}e_j.\]
 
\begin{remark}
  One can obtain a picture reminiscent of fans defining toric varieties \cite{CLS,Fulton:toricvarieties} by taking the face fan $\Delta_\Sigma$ of $\Sigma$ and linearly extending the map to get $h\colon \Delta_\Sigma \to N$. Of course, in this case, under $h$, the cones of the fan may intersect in their interiors.   
\end{remark}

\begin{definition}
    A linear subspace $M\subset K[\Sigma]^{1}$ is {\em linearly generic} if for any set of vertices $S\subseteq V$ with $|S|\leq \dim M$, the set $h(S)$ spans a $|S|$-dimensional subspace.
\end{definition}

Linear genericity implies that the vertices are mapped into general position: that the preimage under $h$ of each $k$-dimensional linear subspace of $N$ contains at most $k$ points of $V$. Equivalently, for any $S=\{v_1,\dots,v_m\}\subseteq V$ with $m\leq \dim M$,
\[\codim\left(\bigcap_{i=1}^m e_{v_i}^{-1}(0)\subseteq M\right)=m.\]

We define $A_M^*(\Sigma)\coloneqq K[\Sigma]/(M)$, the quotient of $K[\Sigma]$ by the ideal generated by $M$ for a linear subspace $M\subset K[\Sigma]^{1}$. We will suppress $M$ in the notation when it is understood. We will call the graded ring $A_M^*(\Sigma)$ {\em the Chow ring of $\Sigma$ with respect to $M$} in analogy with simplicial toric varieties.

Now, under certain conditions, $A^*_M(\Sigma)$ obeys Poincar\'{e} duality (see, for example, \cite{BrunsHerzog}):

\begin{theorem}
    If $\Sigma$ is a $(d-1)$-dimensional simplicial $K$-homology sphere, then there is a $d$-dimensional regular parameter subspace $M$. In this case,
    \[A^m_M(\Sigma)\cong\begin{cases}
        0 &\text{if }m>d,\\
        K & \text{if }m=d
    \end{cases}\] 
    Fix an isomorphism $\lambda:A^d_M(\Sigma)\to K$.
    For $0\leq m\leq d$, the pairing 
    \begin{align*}
    \langle\ ,\ \rangle\colon A_M^m(\Sigma)\times A_M^{d-m}(\Sigma)&\to K\\
    (x,y)  &\mapsto \lambda(xy)
    \end{align*}
    is non-degenerate, i.e., it induces an isomorphism $A_M^m(\Sigma)\cong A^{d-m}_M(\Sigma)^*$.
\end{theorem}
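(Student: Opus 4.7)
The plan is to deduce the theorem from three standard facts in combinatorial commutative algebra about simplicial homology spheres, each of which the note explicitly takes on faith. First, I would observe that $K[\Sigma]$ has Krull dimension $d$: its minimal primes correspond to the facets of $\Sigma$, each of which has $d$ elements. After a harmless extension of scalars if $K$ is finite, a prime-avoidance argument applied to $K[\Sigma]^1$ produces $d$ linear forms $\ell_1,\dots,\ell_d \in K[\Sigma]^1$ that form a linear system of parameters (LSOP), giving a candidate $d$-dimensional subspace $M = \Span(\ell_1, \dots, \ell_d) \subseteq K[\Sigma]^1$.

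Next, I would invoke Reisner's criterion: since $\Sigma$ is a simplicial $K$-homology sphere, $K[\Sigma]$ is Cohen--Macaulay. Hence any LSOP is automatically a regular sequence, so $M$ is a regular parameter subspace. For a Cohen--Macaulay Stanley--Reisner ring modulo an LSOP, the Hilbert series of $A^*_M(\Sigma)$ equals $\sum_i h_i t^i$, the $h$-polynomial of $\Sigma$. The Dehn--Sommerville relations for simplicial spheres give $h_0 = h_d = 1$ and $h_m = 0$ for $m > d$, which immediately yields $A^d_M(\Sigma) \cong K$ and $A^m_M(\Sigma) = 0$ for $m > d$, establishing the first two assertions of the theorem.

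For Poincar\'e duality, I would invoke the stronger fact that $K[\Sigma]$ is Gorenstein for any $K$-homology sphere. Gorensteinness is preserved under quotienting by a regular sequence, so $A^*_M(\Sigma)$ is an Artinian graded Gorenstein $K$-algebra. The structure theorem for such algebras says that the socle is one-dimensional and concentrated in the top nonzero degree---which is $d$ by the Hilbert series calculation above---and that the multiplication pairing $A^m_M(\Sigma) \otimes A^{d-m}_M(\Sigma) \to A^d_M(\Sigma)$ is perfect. Fixing any isomorphism $\lambda \colon A^d_M(\Sigma) \to K$ then produces the nondegenerate pairing in the statement.

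The main obstacle is essentially bookkeeping rather than substance: constructing the LSOP in a characteristic-free way may require enlarging $K$ to an infinite field or choosing linear forms carefully inside a sufficiently large extension, after which one must verify that the resulting regular parameter subspace descends (or, alternatively, reformulate the statement so that it is about $K[\Sigma]\otimes_K K'$ for some $K'\supseteq K$). Conceptually, all the real content is packaged inside Reisner's theorem (Cohen--Macaulayness of $K[\Sigma]$) and Stanley's characterization (Gorensteinness of $K[\Sigma]$ for homology spheres), both of which are taken as standard inputs from combinatorial commutative algebra.
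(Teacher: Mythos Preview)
Your proposal is correct and is precisely the standard argument the paper is invoking: the paper does not prove this theorem at all but simply states it with a reference to Bruns--Herzog, having announced in the introduction that basic facts about Cohen--Macaulay and Gorenstein complexes will be taken on faith. Your outline (Reisner's criterion for Cohen--Macaulayness, so an LSOP is a regular sequence; Stanley's result that a $K$-homology sphere has Gorenstein face ring; then the structure theorem for Artinian graded Gorenstein algebras) is exactly the content packaged into that citation.

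One point of comparison worth noting: while the paper cites out the full theorem here, it does later give elementary, self-contained arguments for the first two assertions under the extra hypothesis that $M$ is linearly generic. The Displacement Lemma (Lemma~\ref{l:displacement}) yields directly that $A^m_M(\Sigma)=0$ for $m>d$ and that $A^d_M(\Sigma)$ is at most one-dimensional (Corollary~\ref{l:topdegree}); the explicit volume map then shows $A^d_M(\Sigma)$ is nonzero. This bypasses your Hilbert-series/Dehn--Sommerville route for those two claims and works already for normal pseudomanifolds, though it does not by itself recover the non-degeneracy of the pairing, for which the Gorenstein input you cite is still needed.
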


\section{Displacement Lemma}

Given $x\in A^*(\Sigma)\coloneqq K[\Sigma]/(M)$, we will represent $x$ by an element of  $K[\Sigma]$ given by a sum of square-free monomials with support disjoint from a certain face of $\Sigma$.

\begin{lemma} \label{l:displacement} Let $\Sigma$ be a $(d-1)$-dimensional normal pseudomanifold. Let $M$ be a linearly generic $d$-dimensional subspace of $K[\Sigma]^{1}$. Let $\gamma\in \Sigma_{(d-m-1)}$. Then, any $x\in A^m(\Sigma)$ is equivalent to a linear combination of square-free monomials of the form $\sum_{\eta\in U_\gamma} a_\eta x_\eta$ where $a_\eta\in K$ and
\[U_\gamma\coloneqq \{\eta\in \Sigma_{(m-1)}\mid \eta\cap \gamma=\varnothing\},\]
i.e., that the support of each monomial is disjoint from $\gamma$. 
Moreover, for any $\tau\in (\lk_{\Sigma}(\gamma))_{(m-1)}$, we may choose the sum to be of the form $\sum_{\eta\in U_\gamma^\tau} a_\eta x_\eta$ where
\[U_\gamma^\tau\coloneqq \left(U_\gamma\setminus \lk_{\Sigma}(\gamma)_{(m-1)}\right)\cup\{\tau\},\]
i.e., that $\tau$ is the only $(m-1)$-face in the sum contained in $\lk_{\Sigma}(\gamma)$.
\end{lemma}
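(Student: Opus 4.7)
The strategy is to use linear genericity of $M$ to produce, for any monomial representative of an element of $A^m(\Sigma)$, explicit relations in $M\cdot K[\Sigma]^{m-1}$ that reduce it step-by-step to the desired form.

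For the first assertion, I would induct on the well-founded lexicographic statistic
\[\mu(z)\coloneqq \bigl(|\supp(z)\cap\gamma|,\,-|\supp(z)|\bigr)\in\Z_{\geq 0}\times\Z_{\leq 0}\]
defined on nonzero monomials $z\in K[\Sigma]^m$; the minimum value $(0,-m)$ is achieved exactly when $z$ is a scalar multiple of $x_\eta$ for some $\eta\in U_\gamma$. For $z$ of larger weight I choose $v\in\supp(z)$ with priority: if $\supp(z)\cap\gamma\ne\varnothing$, take $v$ in this intersection, preferring exponent $1$; otherwise $z$ is not square-free and $\supp(z)$ is disjoint from $\gamma$, so take any $v\in\supp(z)$ with exponent $\ge 2$. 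A direct count shows $|(\supp(z)\cup\gamma)\setminus\{v\}|\le d-2$ in all cases, so linear genericity supplies $\ell=\sum_w a_w x_w\in M$ with $a_v\ne 0$ and $a_w=0$ for every $w\in(\supp(z)\cup\gamma)\setminus\{v\}$. Multiplying by $z/x_v\in K[\Sigma]^{m-1}$ and using $\ell\cdot(z/x_v)=0$ in $A^m(\Sigma)$, the surviving terms yield
\[a_v\,z=-\sum_{w\in\lk_\Sigma(\supp(z/x_v))_{(0)}\setminus(\supp(z)\cup\gamma)} a_w\,x_w(z/x_v),\]
and one checks each summand on the right has strictly smaller $\mu$ (either $|\supp\cap\gamma|$ strictly drops, or the support strictly enlarges). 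Well-founded induction then closes the first part.

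For the second assertion, the key step is: whenever $\eta,\eta'\in\lk_\Sigma(\gamma)_{(m-1)}$ share an $(m-2)$-face $\alpha=\eta\cap\eta'$, writing $\eta=\alpha\cup\{v\}$ and $\eta'=\alpha\cup\{v'\}$, there is a relation
\[a_v\,x_\eta+a_{v'}\,x_{\eta'}\equiv 0\pmod{\Span\bigl\{x_{\eta''}:\eta''\in U_\gamma\setminus\lk_\Sigma(\gamma)_{(m-1)}\bigr\}}\]
in $A^m(\Sigma)$ with both $a_v,a_{v'}\ne 0$. Indeed, $\alpha\cup\gamma\in\Sigma_{(d-2)}$ lies in exactly two facets by the pseudomanifold property, namely $\eta\cup\gamma$ and $\eta'\cup\gamma$, so $\lk_\Sigma(\alpha\cup\gamma)_{(0)}=\{v,v'\}$. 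Since $|\alpha\cup\gamma|=d-1$, linear genericity yields a nonzero $\ell\in M$, unique up to scalar, with $e_w(\ell)=0$ for all $w\in\alpha\cup\gamma$; applying linear genericity in turn to the $d$-element sets $\alpha\cup\gamma\cup\{v\}$ and $\alpha\cup\gamma\cup\{v'\}$ forces $a_v=e_v(\ell)\ne 0$ and $a_{v'}\ne 0$, as otherwise $\ell$ would be forced to zero. Expanding $\ell\cdot x_\alpha=0$ in $A^m(\Sigma)$, the terms with $w\in\alpha$ or $w\in\gamma$ vanish, and among the remaining $x_{\alpha\cup\{w\}}$, only those with $w\in\lk_\Sigma(\alpha\cup\gamma)=\{v,v'\}$ lie in $\lk_\Sigma(\gamma)_{(m-1)}$; the rest lie in the span on the right.

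To conclude, normality of $\Sigma$ ensures $\lk_\Sigma(\gamma)$ is a strongly connected pseudomanifold, so any two of its top-dimensional faces are joined by a face path. Iterating the two-term relation along such a path from an arbitrary $\eta\in\lk_\Sigma(\gamma)_{(m-1)}$ to $\tau$ writes $x_\eta$ as a scalar multiple of $x_\tau$ modulo the span on the right, and substituting into the representation produced in the first part yields a sum over $U_\gamma^\tau$. The main technical obstacle is the tight counting in the second step: $|\alpha\cup\gamma|=d-1$ is essentially the largest constraint set that linear genericity can accommodate, so one cannot directly require $\ell$ to vanish on $v'$ as well as pick out $v$; instead the existence of $\ell$ and the nonvanishing of both $a_v$ and $a_{v'}$ must be extracted indirectly from the uniqueness of $\ell$ modulo scalars.
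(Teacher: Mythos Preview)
Your proof is correct and follows essentially the same approach as the paper's. Both parts use the same reduction mechanism: in part one, repeatedly multiplying a suitable $\ell\in M$ against $z/x_v$ to trade a ``bad'' vertex $v$ for vertices outside $\supp(z)\cup\gamma$; in part two, walking along a face path in the strongly connected pseudomanifold $\lk_\Sigma(\gamma)$ using relations $\ell\cdot x_\alpha$ for $(m-2)$-faces $\alpha$.

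The differences are cosmetic. For the first part, the paper uses the scalar measure $\delta(z)=\sum_{m_v\neq 0}(m_v-1)+|\supp(z)\cap\gamma|$ in place of your lexicographic $\mu$; both decrease under the same rewriting step, and the paper's count $|\supp(z)\cup\gamma|=d-\delta(z)<d$ is equivalent to your $\le d-1$ bound. For the second part, the paper imposes the $d$ conditions $e_{v_-}(\ell)=1$ and $e_u(\ell)=0$ for $u\in\kappa\cup\gamma$ directly, whereas you impose only the $d-1$ vanishing conditions and then argue $e_v(\ell),e_{v'}(\ell)\neq 0$ from the uniqueness of $\ell$ up to scalar; your extra observation that $e_{v'}(\ell)\neq 0$ is true but not actually needed for the lemma (a zero coefficient on $x_\tau$ is still a valid representation over $U_\gamma^\tau$). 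Your explicit use of the pseudomanifold property to identify $\{v,v'\}$ as the only vertices $w$ with $\alpha\cup\{w\}\in\lk_\Sigma(\gamma)$ is exactly what the paper's final sentence (``No summand on the right corresponds to a top-dimensional face of $\lk_\Sigma(\gamma)$'') is invoking.
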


\begin{proof}
It suffices to prove this lemma for a monomial $x=\prod_v x_v^{m_v}$ where $m_v\in\Z_{\geq 0}$.
For the monomial $x$, we define a measure of how far it fails to be square-free or have support avoiding $\gamma$:
\[\delta(x)\coloneqq \left(\sum_{v\mid m_v\neq 0} (m_v-1)\right)+|\supp(x)\cap \gamma|.\]
If $\delta(x)=0$, we are done. Otherwise, we will rewrite $x$ as a sum of monomials with smaller $\delta$. Let $v$ be a vertex with $m_v\geq 2$ or with $v\in\supp(x)\cap \gamma$. Hence, $x_v^{-1}x\in K[\Sigma]$.
Now, 
\begin{align*}
|\supp(x)\cup \gamma|&=|\supp(x)|+|\gamma|-|\supp(x)\cap \gamma|\\
&=m-\delta(x)+|\gamma|\\
&=m-\delta(x)+(d-m)\\
&=d-\delta(x)<d\\
\end{align*}
By linear genericity, we may pick $\ell\in M$ such that $e_v(\ell)=1$ and $e_u(\ell)=0$ for $u\in \left(\supp(x)\cup\gamma\right)\setminus\{v\}$. In $A^*(\Sigma)$,
\[0=\ell\cdot (x_v^{-1}x)=x+\sum_{u\not\in \supp(x)\cup\gamma} e_u(\ell)x_u\cdot\left(x_v^{-1}x\right).
\]
The value of $\delta$ on each summand on the right is at most $\delta(x)-1$. We continue this process until we can rewrite $x$ as a sum of square-free monomials corresponding to elements of $U$.

Now, for $\eta\in (\lk_{\Sigma}(\gamma))_{(m-1)}$, we must rewrite $x_\eta$ as a sum of monomials corresponding to faces in $U_\gamma^\tau$. We will accomplish this by means of a face path from $\eta$ to $\tau$ in $\lk_{\Sigma}(\gamma)$, which is strongly connected because $\Sigma$ is a normal pseudomanifold. The repeated step will be passing from one $(m-1)$-dimensional face $\eta_-$ to an adjoining $(m-1)$ face $\eta_+$. 
Let $\eta_+$ and  $\eta_-$ be $(m-1)$-dimensional faces of $\lk_{\Sigma}(\gamma)_{(m-1)}$ sharing a $(m-2)$-dimensional face $\kappa$.
We will write $x_{\eta_-}$ as a linear combination of $x_{\eta_+}$ and monomials corresponding to $(m-1)$-faces not in $\lk_{\Sigma}(\gamma)$. Write $\eta_-=\kappa\cup\{v_-\}$ and $\eta_+=\kappa\cup\{v_+\}$. Pick $\ell\in L$ such that $e_{v_-}(\ell)=1$ and $e_u(\ell)=0$ for $u\in \kappa\cup \gamma$. Then, 
\[0=\ell \cdot x_\kappa = x_{v_-}x_\kappa+\sum_{u\not\in \kappa\cup\gamma\cup \{v_-\}} e_u(\ell)x_ux_\kappa
=x_{\eta_-}+e_{v_+}(\ell)x_{\eta_+}+\sum_{u\not\in \kappa\cup\gamma\cup\{v_-,v_+\}} e_u(\ell)x_u x_\kappa\]
No summand on the right corresponds to a top-dimensional face of $\lk_{\Sigma}(\gamma)$.
\end{proof}

\begin{corollary}
    Every $x\in A^*(\Sigma)$ can be written as the sum of square-free monomials.
\end{corollary}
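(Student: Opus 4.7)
The plan is to deduce the corollary directly from Lemma~\ref{l:displacement} by writing any element of $A^*(\Sigma)$ as a sum of its homogeneous components and applying the lemma degree by degree. Since $A^*(\Sigma)$ is graded, it suffices to prove the statement for a homogeneous $x\in A^m(\Sigma)$, and then the claim for a general element follows by summing over $m$.

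For the trivial ranges, I would dispose of the edge cases first: when $m=0$ the group $A^0(\Sigma)=K$ consists of constants, each of which is itself a square-free monomial (the empty product), and when $m>d$ we have $A^m(\Sigma)=0$ by the Poincar\'e duality theorem recalled above, so there is nothing to prove. The only substantive range is therefore $1\leq m\leq d$.

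For $1\leq m\leq d$, the strategy is to invoke Lemma~\ref{l:displacement} for some suitable choice of face $\gamma$ of dimension $d-m-1$. To do this I first need to verify that $\Sigma_{(d-m-1)}$ is nonempty. Since $\Sigma$ is purely $(d-1)$-dimensional and $-1\leq d-m-1\leq d-2$ in this range, every $(d-1)$-face of $\Sigma$ contains faces of every intermediate dimension, and in particular $\Sigma_{(d-m-1)}\neq\varnothing$. Choosing any $\gamma\in\Sigma_{(d-m-1)}$, the lemma rewrites $x$ as $\sum_{\eta\in U_\gamma} a_\eta x_\eta$, where each $\eta\in U_\gamma\subseteq \Sigma_{(m-1)}$ so each $x_\eta$ is a square-free monomial. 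No extra property beyond the square-freeness of each summand is needed for this corollary, so we do not have to invoke the second (face-path) half of Lemma~\ref{l:displacement}.

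There is no genuine obstacle here: the only point that deserves a sentence of care is the existence of a face of the prescribed dimension, which follows from pure dimensionality. Everything else is a direct appeal to the displacement lemma, so the proof is essentially a one-liner once the homogeneous reduction and the trivial degree ranges are set aside.
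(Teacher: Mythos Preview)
Your argument is correct and is exactly the intended one: the paper states this corollary without proof as an immediate consequence of Lemma~\ref{l:displacement}, and you have simply made the degree-by-degree application explicit.

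One small point of care: for $m>d$ you invoke the Poincar\'e duality theorem, but in the paper that theorem is stated only for $K$-homology spheres, whereas Lemma~\ref{l:displacement} (and hence implicitly this corollary) is set in the generality of normal pseudomanifolds with linearly generic $M$. Moreover, the paper records $A^m(\Sigma)\cong 0$ for $m>d$ immediately \emph{after} this corollary, as a consequence rather than an input. This is easy to patch without changing your approach: the reduction step in the proof of Lemma~\ref{l:displacement} still applies to any monomial of degree $m>d$ (its support is a face, hence has at most $d$ elements, so linear genericity still furnishes the needed $\ell$), and since there are no nonzero square-free monomials of degree exceeding $d$, the process terminates at $0$.
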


Also, $A^{m}(\Sigma)\cong 0$ for $m>d$.
If we consider the case where $m=d$ and  $\gamma$ is the empty face, we have the following:

\begin{corollary} \label{l:topdegree}
For any $\tau\in \Sigma_{(d-1)}$, the vector space $A^d(\Sigma)$ is spanned by $x_\tau$.
\end{corollary}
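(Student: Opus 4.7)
The plan is to deduce this corollary as a direct specialization of Lemma~\ref{l:displacement}. Take $m = d$; then the lemma asks for a face $\gamma \in \Sigma_{(d-m-1)} = \Sigma_{(-1)}$, which forces $\gamma = \varnothing$, the unique $(-1)$-dimensional face. The statement we need should then fall out once we correctly identify the sets $U_\gamma$ and $U_\gamma^\tau$ in this degenerate setting.

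First I would unwind the definitions. Since $\varnothing \cap \eta = \varnothing$ for every face $\eta$, we have $U_\varnothing = \Sigma_{(d-1)}$, the set of all facets of $\Sigma$. Similarly, $\lk_\Sigma(\varnothing) = \Sigma$ because the condition $\sigma \cup \varnothing \in \Sigma$ and $\sigma \cap \varnothing = \varnothing$ is satisfied by every face of $\Sigma$. Consequently $(\lk_\Sigma(\varnothing))_{(d-1)} = \Sigma_{(d-1)}$, and for any chosen facet $\tau \in \Sigma_{(d-1)}$ the second part of the Displacement Lemma gives
\[U_\varnothing^\tau = \bigl(\Sigma_{(d-1)} \setminus \Sigma_{(d-1)}\bigr) \cup \{\tau\} = \{\tau\}.\]

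Applying Lemma~\ref{l:displacement} with this data, any $x \in A^d(\Sigma)$ is equivalent to a linear combination indexed by $U_\varnothing^\tau = \{\tau\}$, i.e., $x = a_\tau x_\tau$ for some $a_\tau \in K$. This is exactly the statement that $x_\tau$ spans $A^d(\Sigma)$.

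The plan has essentially no hard step, since the Displacement Lemma has already done the work; the main point requiring care is simply verifying that the lemma applies when $\gamma$ is the empty face. The only conceptual check is that $\lk_\Sigma(\varnothing) = \Sigma$ is a normal pseudomanifold whose strong connectedness among facets is what powers the face-path argument in the second half of the lemma's proof. With $\Sigma$ assumed to be a $(d-1)$-dimensional normal pseudomanifold and $M$ linearly generic, both the square-free reduction and the movement along face paths apply verbatim, so the specialization is automatic.
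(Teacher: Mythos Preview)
Your proposal is correct and is exactly the approach the paper takes: the corollary is stated immediately after the observation ``If we consider the case where $m=d$ and $\gamma$ is the empty face,'' and you have simply unpacked the definitions of $U_\varnothing$ and $U_\varnothing^\tau$ to make the specialization explicit.
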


From this, we can conclude that $\dim A^d(\Sigma)\leq 1$.
\section{The volume map}

Following Karu and Xiao \cite{KX}, we will use piecewise polynomials (inspired by \cite{Billera}) to write down an explicit degree map, $\Vol\colon A^d(\Sigma)\to K$ following Brion \cite{Brion}. We call it ``$\Vol$'' to avoid confusion  with the gradings on the relevant rings. This approach is a closely related alternative to the use of stresses by Lee \cite{Lee:stresses}.

Let $\Sigma$ be a $(d-1)$-dimensional pseudomanifold.
Fix once and for all an isomorphism $\wedge^d N\cong K$.
Recall that $M\subset K[\Sigma]^{1}$ induces $h\colon \Sigma_{(0)}\to N$. For $\sigma\in\Sigma$, let $N_\sigma\coloneqq \Span_K(h(\sigma))$, the image of the vector space spanned by the image of $\sigma$ under $h$. Set $M_\sigma\coloneqq N_{\sigma}^\vee=M/h(\sigma)^\perp$. For $\tau\subset \sigma$, there is a natural inclusion $N_\tau\hookrightarrow N_\sigma$ inducing a surjection $M_\sigma\to M_\tau$.

Let $\Sym^m M_\sigma$ denote the $m$th the symmetric power of $M_\sigma$ interpreted as homogeneous degree $m$ polynomials on $N_\sigma$. For $\tau\subseteq \sigma$, there is a linear map $\Sym^m M_\sigma\to \Sym^m M_\tau$, and we will denote the image of $f$ under that map by $f|_{\tau}$ which can be interpreted as the restriction of the function $f$ to $N_\tau$.
Write $\Sym^* M_\sigma\coloneq \bigoplus_m \Sym^m M_\sigma$ for the symmetric algebra on $M_\sigma$. We define piecewise polynomials on $\Sigma$ with respect to $M$ to be
\[\PP_M^*(\Sigma)\coloneqq \{(f_\sigma)_{\sigma\in \Sigma_{(d-1)}}\in (\Sym^* M)^{\Sigma_{(d-1)}}\ {\mid}\ (f_{\sigma_+})|_\tau=(f_{\sigma_-})|_\tau \text{ for all }\tau\subsetneq \sigma_+,\sigma_-\},\]
that is a collection of polynomials $f_\sigma\in \Sym^* M$ indexed by top-dimensional faces that agree on the spans of smaller-dimensional faces. We will suppress ``$M$'' when it is understood. The ring $\PP^*(\Sigma)$ is graded by the degree of polynomials: we will write $\PP^m(\Sigma)$ for the vector space consisting of $(f_{\sigma})$ for which $f_\sigma\in \Sym^m M$ for each $\sigma$.
If one takes the the cone over $\Sigma$ in $N$, these are analogous to the usual piecewise polynomials from toric geometry (see e.g.,~\cite{Payne:equivariant}). 
There is a natural linear map $\Sym^*M\to \PP^*(\Sigma)$ taking $f\mapsto (f_\sigma\coloneqq f)$, and thus we may consider a global polynomial as a piecewise polynomial. This makes $\PP^*(\Sigma)$ into a $\Sym^*M$-algebra. We write $(M)\subset\PP_M^*(\Sigma)$ for the ideal generated by $M$. 

For an oriented $(d-1)$-face $\sigma=(v_1,\dots,v_d)$, defined up to even permutation, we define  $[\sigma]\in \wedge^d N\cong K$ by 
\[[\sigma]\coloneqq h(v_1)\wedge\dots \wedge h(v_d),\]
which is nonzero by linear genericity since $\{h(v_1),\dots,h(v_d)\}$ is a linearly independent set. For any $v_i\in \sigma$, we define a linear map 
\begin{align*}
    [\sigma_v]\colon N&\to \wedge^d N\\
    y&\mapsto h(v_1)\wedge\dots\wedge h(v_{i-1})\wedge y \wedge h(v_{i+1})\wedge \dots \wedge h(v_d).
\end{align*}
We will extend $[\ \ ]$ and $[(\ \ )_v]$ to ordered $d$-tuples of vertices up to alternating permutation, i.e., to elements of $\Sigma_{(0)}^d/A_d$. Set $[(v_1,\dots,v_d)]=h(v_1)\wedge\dots\wedge h(v_d)$ and similarly for $[(v_1,\dots,v_d)_{v_i}]$. For $T=(v_1,\dots,v_d)$ and $w\in \Sigma_{(0)}$, write 
\[T_{v_i}^w\coloneqq (v_1,\dots,v_{i-1},w,v_{i+1},\dots,v_d),\]
i.e., we substitute $w$ for $v_i$.

For $v\in\Sigma_{(0)}$, we define the Courant function $\Psi_v\in \PP^1(\Sigma)$ by
\[(\Psi_v)_{\sigma}\coloneqq
\begin{cases}
    [\sigma_v]/[\sigma] &\text{if }v\in\sigma\\
    0 &\text{else}.
\end{cases}
\]
This function is characterized by the fact that it is equal to $1$ on $v$, $0$ on any other vertex, and its restriction to any top-dimensional face is linear. It is a straightforward verification that it is indeed a piecewise-linear function.
For $\tau\in\Sigma$, then $\Psi_\tau\coloneqq \prod_{v\in\tau} \Psi_v$ is an element of $\PP^{|\tau|}(\Sigma)$.

We define for a top-dimensional face $\sigma={v_1,\dots,v_d}$,
\[\chi_\sigma\coloneq \frac{[\sigma_{v_1}]}{[\sigma]}\dots\frac{[\sigma_{v_d}]}{[\sigma]}\in \Sym^d M.\]
For example, if $\sigma=(e_1,\dots,e_d)$ corresponds to the the standard orthant in $K^d$, then $\chi_{\sigma}=x_1\dots x_d$ where $x_1,\dots,x_d$ are coordinates on $K^d$.

\begin{lemma} The association $x_v\mapsto \Psi_v$ induces a $(\Sym^*M)$-algebra isomorphism $q\colon K[\Sigma]\to \PP^*(\Sigma)$.
\end{lemma}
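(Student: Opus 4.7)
The plan has three parts: show $q$ is well-defined on $K[\Sigma]$, verify the $\Sym^*M$-algebra structure, and prove bijectivity.

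For well-definedness, the Stanley--Reisner ideal $I_\Sigma$ is generated by monomials $x_S$ with $S \notin \Sigma$, so it suffices to check that $\Psi_S = \prod_{v \in S} \Psi_v = 0$ in $\PP^*(\Sigma)$. On any top-dimensional face $\sigma$, the hypothesis $S \notin \Sigma$ forces some $v \in S \setminus \sigma$, and then $(\Psi_v)_\sigma = 0$ gives $(\Psi_S)_\sigma = 0$. For the algebra structure, I verify $q(\ell) = \ell$ in $\PP^1(\Sigma)$ for each $\ell = \sum_v a_v x_v \in M$: both are piecewise linear on each top face, both take value $a_w$ at every vertex $w$ (by the defining property of the Courant function on one side, and by $\ell(h(w)) = a_w$ on the other), and linear genericity ensures $h(v_1),\ldots,h(v_d)$ spans $N_\sigma$, forcing agreement on each top face.

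For bijectivity, for each top face $\sigma = \{v_1,\ldots,v_d\}$ let $\mathrm{ev}_\sigma \colon \PP^*(\Sigma) \to \Sym^* M_\sigma$ denote evaluation at $\sigma$. The composition $\mathrm{ev}_\sigma \circ q$ sends $x_v$ to $0$ for $v \notin \sigma$ and to the basis element $\Psi_{v_i}|_\sigma$ of $M_\sigma$ for $v = v_i \in \sigma$; equivalently, it is the quotient $K[\Sigma] \twoheadrightarrow K[\Sigma]/\mathfrak{p}_\sigma \cong \Sym^* M_\sigma$ with $\mathfrak{p}_\sigma = (x_v : v \notin \sigma)$. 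Injectivity then follows because $q(g) = 0$ implies $g \in \bigcap_\sigma \mathfrak{p}_\sigma$, and the classical primary decomposition $I_\Sigma = \bigcap_{\sigma\text{ facet}} \mathfrak{p}_\sigma$ shows this intersection is $(0)$ in $K[\Sigma]$. For surjectivity, I would show the Courant monomials $\Psi_\tau = q(x_\tau)$ for $\tau \in \Sigma$ span $\PP^*(\Sigma)$ as a $\Sym^*M$-module: given $(f_\sigma) \in \PP^m(\Sigma)$, fix a reference top face $\sigma_0$, expand $f_{\sigma_0}$ in the basis of monomials in $\{\Psi_v|_{\sigma_0}\}_{v \in \sigma_0}$ to produce a first approximation $g_0 \in K[\Sigma]^m$, and then inductively correct the mismatches on adjacent top faces $\sigma$ reached by a face path by adding terms of the form $x_w \cdot Q$ with $w \in \sigma$ not in the previously matched face, using the codimension-one compatibility of $f$ and the strong connectedness of the pseudomanifold.

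The main obstacle is the surjectivity step, specifically the global consistency of these inductive corrections. Each correction $x_w \cdot Q$ automatically vanishes on top faces not containing $w$, but it can disturb previously matched faces that do contain $w$; handling this requires exploiting compatibility of $\PP^*(\Sigma)$ not only on codimension-one faces but on all lower-dimensional faces, and is where the pseudomanifold hypothesis genuinely enters — paralleling the role strong connectedness plays in the displacement lemma. An alternative route would be to compare Hilbert series on both sides, using the fact that both $K[\Sigma]$ and $\PP^*(\Sigma)$ are free $\Sym^*M$-modules of rank equal to $|\Sigma_{(d-1)}|$; then surjectivity drops out of injectivity by a rank comparison.
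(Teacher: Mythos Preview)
The paper does not actually give a proof: it declares the lemma a ``straightforward verification'' and only spells out the $\Sym^*M$-algebra compatibility, which is exactly what you do in your second paragraph. Your treatment of well-definedness and injectivity via the primary decomposition $I_\Sigma=\bigcap_{\sigma}\mathfrak p_\sigma$ is correct and more thorough than anything in the paper.

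There is, however, a genuine gap in your surjectivity argument. Your inductive face-path correction is, as you yourself note, incomplete; and your fallback --- deducing surjectivity from injectivity by comparing Hilbert series of two free $\Sym^*M$-modules --- assumes that $K[\Sigma]$ is free over $\Sym^*M$. That freeness is equivalent to $\Sigma$ being Cohen--Macaulay, which holds for the homology spheres used later in the paper but is \emph{not} guaranteed for the pseudomanifolds for which the lemma is stated. (Nor is it obvious, absent the isomorphism you are trying to prove, that $\PP^*(\Sigma)$ is free of the claimed rank.)

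The intended ``straightforward'' surjectivity goes directly through your evaluation maps $\mathrm{ev}_\sigma$ and does not need freeness or an induction. For a facet $\sigma$, the functions $\{(\Psi_v)_\sigma : v\in\sigma\}$ are a basis of $M$, so $f_\sigma$ has a unique expansion $f_\sigma=\sum_\alpha c^\sigma_\alpha\prod_{v\in\sigma}(\Psi_v)_\sigma^{\alpha_v}$. Restricting to $N_\tau$ for $\tau\subseteq\sigma$ kills $(\Psi_v)_\sigma$ for $v\notin\tau$ and sends the remaining $(\Psi_v)_\sigma$ to the dual basis of $\{h(v):v\in\tau\}$ in $M_\tau$; thus the compatibility condition $(f_\sigma)|_\tau=(f_{\sigma'})|_\tau$ forces $c^\sigma_\alpha=c^{\sigma'}_\alpha$ whenever $\supp(\alpha)\subseteq\tau\subseteq\sigma\cap\sigma'$. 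Setting $g=\sum_{\supp(\alpha)\in\Sigma} c_\alpha\, x^\alpha$ with $c_\alpha$ this common value (purity guarantees some facet contains $\supp(\alpha)$) gives $q(g)=(f_\sigma)$. No Cohen--Macaulay hypothesis, no induction along face paths, is needed.
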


The proof is a straightforward verification. For completeness, we explain the $(\Sym^* M)$-action. 
For $m\in M$, in $K[\Sigma]$, we write $m=\sum_v e_v(m)x_v\in K[\Sigma]$. This maps to
$f_m=\sum_v e_v(m)\Psi_v$, a piecewise-linear function characterized by the property that on each vertex $v$, it takes the value $e_v(m)$. Because for each top-dimensional 
cone $\sigma\in\Delta$, $(f_m)_{\sigma}$ agrees with $m$ on the vertices of $\sigma$, we have $(f_m)_{\sigma}=m$. Alternatively, if we identify a face $\sigma$ with its image in $N$ under $h$, the function $f_m$ is equal to $m\in M=N^\vee$ on $\sigma$.

\begin{definition} Let $\Sigma$ be an oriented $(d-1)$-dimensional pseudomanifold.
    The degree map $\Vol\colon \PP_M^*(\Sigma)\to\Frac(\Sym^*M)\otimes (\wedge^d M)^\vee$ is given by
    \[f\mapsto \sum_{\sigma\in\Sigma_{(d-1)}} \frac{f_{\sigma}}{\chi_\sigma[\sigma]}\]
    where $\sigma$ is given its orientation in $\Sigma$.
\end{definition}

This will turn out to to vanish on the ideal $(M)$ and to be valued in $\Sym^*M\otimes (\wedge^d M)^\vee$. Note that $\Vol$ is $(\Sym^*M)$-linear (where $\Sym^*M$ acts trivially on $(\wedge^d M)^\vee$). We will write $\Vol$ for the composition $\Vol\circ\ q$. For $m\leq -1$, we follow the convention that $\Sym^m M=\{0\}$.

\begin{lemma} For any $f\in \PP_M^m(\Sigma)$, $\Vol(f)\in (\Sym^{m-d} M)\otimes (\wedge^d M)^\vee.$
\end{lemma}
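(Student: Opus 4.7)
The plan is to show that the defining sum of $\Vol(f)$, a priori a rational function in $\Frac(\Sym^* M) \otimes (\wedge^d M)^\vee$, actually has no poles; a degree count then gives the claim. Each individual term $\tfrac{f_\sigma}{\chi_\sigma[\sigma]}$ is homogeneous of degree $m-d$, since $f_\sigma \in \Sym^m M$, $\chi_\sigma \in \Sym^d M$, and $[\sigma]$ is a scalar under the fixed isomorphism $\wedge^d N \cong K$. So once polynomiality is established, $\Vol(f)$ lies in $\Sym^{m-d} M \otimes (\wedge^d M)^\vee$, interpreted as $0$ when $m < d$.

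I would factor $\chi_\sigma = \prod_{v \in \sigma} \ell_{\sigma,v}$ with $\ell_{\sigma,v} \coloneqq [\sigma_v]/[\sigma] \in M$; by construction $\ell_{\sigma,v}$ vanishes on $N_{\sigma \setminus \{v\}}$, a hyperplane in $N$ by linear genericity. A short linear-genericity argument shows that distinct $(d-2)$-faces $\tau, \tau' \in \Sigma_{(d-2)}$ define distinct hyperplanes $N_\tau, N_{\tau'}$, since $|\tau \cup \tau'| \geq d$ forces $\dim N_{\tau \cup \tau'} = d$ while $N_\tau$ and $N_{\tau'}$ are $(d-1)$-dimensional. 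Hence the only possible poles of $\Vol(f)$ are simple poles along $N_\tau$ for $\tau \in \Sigma_{(d-2)}$, and the pseudomanifold hypothesis implies that for each such $\tau$ only two terms contribute, namely those indexed by the unique pair $\sigma_\pm = \tau \cup \{w_\pm\}$ of top faces containing $\tau$.

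It remains to show that these two residues along $N_\tau$ cancel. I would extend a basis of $N_\tau$ by a vector $n \in N$ transverse to $N_\tau$, write both $\ell_{\sigma_+, w_+}$ and $\ell_{\sigma_-, w_-}$ as scalar multiples of the dual form $\ell_n$, and then compute each residue by restricting the remaining numerator and denominator factors to $N_\tau$. The piecewise-polynomial compatibility $(f_{\sigma_+})|_\tau = (f_{\sigma_-})|_\tau$ identifies the restricted numerators, while each $\ell_{\sigma_\pm, v}|_\tau$ (for $v \in \tau$) is the unique (up to scalar) element of $M_\tau$ vanishing on $N_{\tau \setminus \{v\}}$, so the surviving denominator factors agree up to scalars as well. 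The overall comparison scalar reduces to a ratio of wedge products involving $[\sigma_\pm]$ and $[(\sigma_\pm)_{w_\pm}]$, which the orientation hypothesis on the pseudomanifold forces to differ in sign between $\sigma_+$ and $\sigma_-$.

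The main obstacle is this last step: although the cancellation is morally the one-variable identity $\tfrac{1}{x} + \tfrac{1}{-x} = 0$, correctly matching the wedge-product signs in $[\sigma_\pm]$ and $[(\sigma_\pm)_{w_\pm}]$ against the prescribed orientation on $\Sigma$ requires careful bookkeeping. Once the cancellation is verified, $\Vol(f)$ is regular on all of $N$, hence a polynomial of degree $m - d$, so $\Vol(f) \in \Sym^{m-d} M \otimes (\wedge^d M)^\vee$.
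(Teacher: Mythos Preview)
Your proposal is correct and follows essentially the same route as the paper: identify the possible poles of $\Vol(f)$ as simple poles along the hyperplanes $N_\tau$ for $\tau\in\Sigma_{(d-2)}$, observe that by the pseudomanifold condition only the two terms indexed by $\sigma_\pm\supset\tau$ can contribute along $N_\tau$, and verify that their residues cancel using the orientation relation $[(\sigma_+)_{v_+}]=-[(\sigma_-)_{v_-}]$ together with the compatibility $(f_{\sigma_+})|_\tau=(f_{\sigma_-})|_\tau$. The paper carries out this last step more directly---multiplying the sum of the two offending terms by $[(\sigma_+)_{v_+}]$ and restricting to $\tau$, where the expression visibly vanishes---whereas you describe the same cancellation in the language of residues; and you are a bit more explicit than the paper in checking (via linear genericity) that distinct $(d-2)$-faces span distinct hyperplanes, so that no third term can contribute a pole along $N_\tau$.
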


\begin{proof}
    It suffices to show $\Vol(f)\in (\Sym^* M)\otimes (\wedge^d M)^\vee$, that is, $\Vol(f)$ is a polynomial. A priori, it is a rational function, but its only possible poles are simple poles on  $N_\tau$ for $\tau\in\Sigma_{(d-2)}$; indeed, $[\sigma_v]$'s only zero is a simple one on  $N_\tau$ for $\tau=\sigma\setminus v$. We'll show that $\Vol(f)$ is, indeed, regular on $N_\tau$. Suppose that $\sigma_+,\sigma_-\in \Sigma_{(d-1)}$ are the only faces containing  $\tau$, so $\sigma_+=\tau\cup \{v_+\}$ and $\sigma_-=\tau\cup \{v_-\}$ and $[(\sigma_+)_{v_+}]=-[(\sigma_-)_{v_-}]$ where the sign comes from the orientation.
    The vanishing of the pole on $N_\tau$ is equivalent to the vanishing of 
    \begin{align*}
    \left([(\sigma_+)_{v_+}]\left(\frac{f_{\sigma_+}}{\chi_{\sigma_+}[\sigma_+]}+\frac{f_{\sigma_-}}{\chi_{\sigma_-}[\sigma_-]}\right)\right)\Bigg|_{\tau}&
    =\left([(\sigma_+)_{v_+}]\left(\frac{f_{\sigma_+}}{[(\sigma_+)_{v_+}](\Psi_\tau)_{\sigma_+}}
    +\frac{f_{\sigma_-}}{[(\sigma_-)_{v_-}](\Psi_\tau)_{\sigma_-}}\right)\right)\Bigg|_\tau\\
    &=\left(\frac{f_{\sigma_+}}{(\Psi_\tau)|_{\sigma_+}}
    -\frac{f_{\sigma_-}}{(\Psi_\tau)|_{\sigma_-}}\right)\Bigg|_\tau=0. \qedhere
  \end{align*}
\end{proof}

\begin{corollary}
    For any $f\in \PP_M^d(\Sigma)$, if $f\in (M)$, then $\Vol(f)=0$.
\end{corollary}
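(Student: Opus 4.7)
The plan is to deduce this immediately from the preceding lemma combined with the $(\Sym^*M)$-linearity of $\Vol$. Since $(M)$ is the ideal generated by $M \subset \PP^1(\Sigma)$, any $f \in (M) \cap \PP^d(\Sigma)$ can be written as a finite sum $f = \sum_i m_i \cdot g_i$ with $m_i \in M$ and $g_i \in \PP^*(\Sigma)$. After decomposing each $g_i$ into its homogeneous components and discarding components whose degree does not complement $\deg(m_i) = 1$ to give $d$, we may assume each $g_i$ lies in $\PP^{d-1}(\Sigma)$.

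Now I would invoke the $(\Sym^*M)$-linearity of $\Vol$ (with $\Sym^*M$ acting trivially on $(\wedge^d M)^\vee$) to write
\[\Vol(f) = \sum_i m_i \cdot \Vol(g_i).\]
Applying the previous lemma with $m = d-1$ gives $\Vol(g_i) \in \Sym^{d-1-d} M \otimes (\wedge^d M)^\vee = \Sym^{-1} M \otimes (\wedge^d M)^\vee$, which is zero by the convention $\Sym^m M = \{0\}$ for $m \leq -1$. Hence $\Vol(f) = 0$.

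There is no real obstacle here: the statement is essentially a bookkeeping consequence of the degree-shifting property of $\Vol$ already established. The only mild point to check is that the homogeneous decomposition is compatible with the ideal structure, which is automatic since $M$ is homogeneous of degree one and $\PP^*(\Sigma)$ is graded.
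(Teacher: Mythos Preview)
Your proof is correct and essentially identical to the paper's own argument: both reduce to the case $f = mg$ with $m \in M$ and $g \in \PP^{d-1}(\Sigma)$, then use the preceding lemma to conclude $\Vol(g) \in \Sym^{-1}M \otimes (\wedge^d M)^\vee = \{0\}$ and hence $\Vol(f) = m\Vol(g) = 0$ by $\Sym^*M$-linearity. You simply spell out the reduction step (sum decomposition and homogeneous component extraction) a bit more explicitly than the paper does.
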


\begin{proof}
    It suffices to show this for $f=mg$ for $m\in M$ and $g\in\PP_M^{d-1}(M)$. Since $\Vol(g)\in \Sym^{-1}M\otimes (\wedge^d M)^\vee=\{0\}$, $\Vol(f)=m\Vol(g)=0.$
\end{proof}

For $\sigma\in\Sigma_{(d-1)}$, $\Vol(x_\sigma)=[\sigma]^{-1}$. Since we know $A^d(\Sigma)$ is at most $1$-dimensional and possesses a nonzero map to $K$, we see that $A^d(\Sigma)$ is $1$-dimensional. 

\begin{example} \label{e:polygon2}
    Consider $C_p$ from Example~\ref{e:polygon1}, oriented so that for all $j$, $(v_j,v_{j+1})$ is positively oriented. Let $M\subseteq K[C_p]^1$ be a $2$-dimensional linearly generic subspace spanned by $\ell_1,\ell_2$ where
    \[\ell_i=\sum_{j=1}^p a_{v_ji}x_{v_j}.\]
    Then, 
    \[\Vol(x_{v_j}x_{v_k})=\begin{cases}
        0 &\text{if } k-j\neq -1,0,1,\\
        \pm[v_jv_k]^{-1} &\text{if }k-j=\pm 1
    \end{cases}\]
    where the sign is determined by whether $v_jv_k$ is positively or negatively oriented.  
    Note that 
    \[[v_jv_k]=
    \begin{vmatrix}
      a_{{v_j}1}&a_{{v_k}1}\\
      a_{{v_j}2}&a_{{v_k}2}
    \end{vmatrix}.\]
    under the isomorphism between $\wedge^2 M$ and $K$.
    Now, 
    \begin{align*}
        \Vol(x_{v_j}^2)&=\Vol\left(x_{v_j}\left(-a_{v_j1}^{-1}\sum_{k\neq j}a_{v_k1} x_{v_i}\right)\right)\\
        &=-a_{v_j1}^{-1}\Vol(a_{v_{j-1}1}x_{v_{j-1}}x_{v_j}+a_{v_{j+1}1}x_{v_j}x_{v_{j+1}})\\
        &=-a_{v_j1}^{-1}a_{v_{j-1}1}[v_{j-1}v_j]^{-1}-a_{v_j1}^{-1}a_{v_{j+1}1}[v_jv_{j+1}]^{-1}.
    \end{align*}
\end{example}

\section{The cone and star lemmas}

We include some lemmas on the Chow rings of cones and stars. The philosophy behind these lemmas owes much to the work of Ed Swartz (see, e.g.,~\cite{Swartz}).

\begin{lemma}[Cone Lemma] \label{l:conelemma} Let $\Sigma$ be a simplicial complex with cone $v\Sigma$ giving a homomorphism $i_v\colon K[\Sigma]\to K[v\Sigma]$. Let $M_{v\Sigma}\subset K[v\Sigma]^1$ be a subspace such that $e_v\colon M_{v\Sigma}\to K$ is surjective, and set $M_{\Sigma}=i_v^{-1}(M_{v\Sigma})$. Then, $i_v$ induces an isomorphism $i_{v}\colon A_{M_\Sigma}^*(\Sigma)\cong A_{M_{v\Sigma}}^*(v\Sigma).$
\end{lemma}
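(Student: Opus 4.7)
The plan is to exploit the structural observation that the Stanley--Reisner ring of a cone is a polynomial extension of the base. Since every maximal face of $v\Sigma$ contains the apex, the minimal non-faces of $v\Sigma$ are identical to those of $\Sigma$, yielding the graded ring identification $K[v\Sigma]\cong K[\Sigma][x_v]$ under which $i_v$ is the inclusion of the coefficient ring. The containment $M_\Sigma=i_v^{-1}(M_{v\Sigma})\subseteq M_{v\Sigma}$ means $i_v$ immediately descends to a graded ring homomorphism $\bar i_v\colon A^*_{M_\Sigma}(\Sigma)\to A^*_{M_{v\Sigma}}(v\Sigma)$, and the task reduces to inverting it.

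To do this I would use the surjectivity hypothesis on $e_v$ to pick $\ell_0\in M_{v\Sigma}$ with $e_v(\ell_0)=1$, so that $\ell_0=x_v+c$ with $c\in K[\Sigma]^1$, and define a ring homomorphism $\phi\colon K[v\Sigma]\to K[\Sigma]$ by sending $x_v\mapsto -c$ and fixing $x_w$ for $w\in\Sigma_{(0)}$. Because the Stanley--Reisner relations of $v\Sigma$ only involve variables $x_w$ with $w\in\Sigma_{(0)}$, this $\phi$ descends from the polynomial ring to the quotient; by design it retracts $i_v$ and kills $\ell_0$. The same surjectivity hypothesis also provides the $K$-vector space splitting $M_{v\Sigma}=K\ell_0\oplus M_\Sigma$, from which $\phi(M_{v\Sigma})\subseteq M_\Sigma$, so $\phi$ descends to a graded ring map $\bar\phi\colon A^*_{M_{v\Sigma}}(v\Sigma)\to A^*_{M_\Sigma}(\Sigma)$.

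Finally I would show $\bar i_v$ and $\bar\phi$ are mutually inverse. One direction, $\bar\phi\circ\bar i_v=\mathrm{id}$, is immediate from the retraction identity $\phi\circ i_v=\mathrm{id}$, which already forces injectivity of $\bar i_v$. For the other direction it suffices to check equality on ring generators of $A^*_{M_{v\Sigma}}(v\Sigma)$: the degree-one classes $\bar x_w$ with $w\in\Sigma_{(0)}$ are trivially fixed, and for the apex $\bar i_v\bar\phi(\bar x_v)=\overline{-c}=\overline{x_v-\ell_0}=\bar x_v$ since $\ell_0\in M_{v\Sigma}$. I expect no real obstacle; the whole argument pivots on the clean polynomial-extension identity $K[v\Sigma]=K[\Sigma][x_v]$ together with the hypothesis $e_v(M_{v\Sigma})=K$ that lets us pick $\ell_0$ to split off the apex variable, after which the rest is bookkeeping.
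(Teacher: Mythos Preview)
Your proposal is correct and follows essentially the same approach as the paper: both pick an element $\ell_0\in M_{v\Sigma}$ with $e_v(\ell_0)=1$, write $\ell_0=x_v+c$, and define the inverse by sending $x_v\mapsto -c$ while fixing the other generators. Your write-up supplies a bit more justification (the polynomial-extension identity $K[v\Sigma]\cong K[\Sigma][x_v]$ and the splitting $M_{v\Sigma}=K\ell_0\oplus M_\Sigma$) than the paper, which simply declares the verifications straightforward, but the argument is the same.
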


\begin{proof}
  The existence of the homomorphism $i_v\colon A^*(\Sigma)\to A^*(v\Sigma)$ is an immediate consequence of $i(M_\Sigma)\subseteq M_{v\Sigma}$. We construct an inverse $j_v$ to $i_v$.
  Pick $f\in M_{v\Sigma}$ with $e_v(f)=1$. Write
    \[f=x_v+\sum_{w\in \Sigma_{(0)}} a_wx_w.\] 
  The inverse $j_v$ is defined by $j_v(x_w)\coloneqq x_w$ for $w\neq v$ and 
  \[j_v(x_v)\coloneqq -\sum_{w\in \Sigma_{(0)}} a_wx_w.\]
  It is straightforward to verify that both $i_v\circ j_v$ and $j_v\circ i_v$ are identity maps.
\end{proof}

The next two results concern the case where $v$ is a vertex of $\Sigma$, a $(d-1)$-dimensional normal pseudomanifold. Let $M_\Sigma\subset K[\Sigma]^1$ be a linear subspace. Then, there is a natural homomorphism $\pi_v\colon K[\Sigma]\to K[\st_{\Sigma}(v)]$ given by mapping $x_w\mapsto 0$ for $w\not\in \st_{\Sigma}(v)_{(0)}$. Set $M_{\st_{\Sigma}(v)}\coloneqq \pi_v(M_\Sigma)$. We can identify $\st_\Sigma(v)$ with $v\lk_\Sigma(v)$, and let $M_{\lk_{\Sigma}(v)}$ be induced from $M_{\st_{\Sigma}(v)}$ as in Lemma~\ref{l:conelemma}. 
The following is immediate:

\begin{corollary}
   Under the above assumptions,  $A^*(\st_{\Sigma}(v)\cong A^*(\lk_{\Sigma}(v))$.
\end{corollary}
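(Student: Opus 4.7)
The plan is to apply the Cone Lemma directly, taking $\lk_\Sigma(v)$ to play the role of $\Sigma$ in that lemma and $\st_\Sigma(v) = v\,\lk_\Sigma(v)$ to play the role of $v\Sigma$. Under this identification, the homomorphism $i_v\colon K[\lk_\Sigma(v)] \to K[\st_\Sigma(v)]$ induced by the inclusion $\lk_\Sigma(v) \hookrightarrow \st_\Sigma(v)$ as an induced subcomplex (obtained by deleting the apex $v$) matches the map denoted $i_v$ in Lemma~\ref{l:conelemma}.

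The hypothesis of Lemma~\ref{l:conelemma} that must hold for $\st_\Sigma(v) = v\,\lk_\Sigma(v)$ is that $e_v\colon M_{\st_\Sigma(v)} \to K$ is surjective. But this is built into the setup: $M_{\lk_\Sigma(v)}$ was defined as the preimage of $M_{\st_\Sigma(v)}$ under the Cone Lemma construction, which tacitly assumes the surjectivity of $e_v$ on $M_{\st_\Sigma(v)}$. (Concretely, since $v \in \st_\Sigma(v)_{(0)}$ and $M_\Sigma$ is typically taken to be linearly generic in context, some element of $M_\Sigma$ has nonzero $x_v$-coefficient, and $\pi_v$ preserves this coefficient.) Thus there is nothing additional to check.

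With the hypothesis in hand, Lemma~\ref{l:conelemma} immediately yields an isomorphism
\[
i_v\colon A^*_{M_{\lk_\Sigma(v)}}(\lk_\Sigma(v)) \xrightarrow{\ \cong\ } A^*_{M_{\st_\Sigma(v)}}(\st_\Sigma(v)),
\]
which is exactly the claim. There is no real obstacle here; the only point requiring care is bookkeeping, namely making sure that the two linear subspaces $M_{\st_\Sigma(v)}$ and $M_{\lk_\Sigma(v)}$ used in the statement coincide with the pair $(M_{v\Sigma}, M_\Sigma)$ appearing in the Cone Lemma after relabeling. This is exactly how $M_{\lk_\Sigma(v)}$ was defined just before the corollary, so the corollary follows at once.
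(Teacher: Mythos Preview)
Your proposal is correct and matches the paper's approach exactly: the paper states the corollary as ``immediate'' from the Cone Lemma, and you have simply unpacked that immediacy by identifying $\st_\Sigma(v)=v\,\lk_\Sigma(v)$ and checking the hypothesis on $e_v$. Your remark that the surjectivity of $e_v$ is tacitly assumed in the paper's construction of $M_{\lk_\Sigma(v)}$ is accurate and worth noting.
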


As can be checked, multiplication by $x_v$,
\[\iota\colon K[\st_{\Sigma}(v)]\to K[\Sigma],\quad u\mapsto x_vu\]
induces a homomorphism of $A^*(\Sigma)$ modules $\iota\colon A^*(\st_{\Sigma}(v))\to A^*(\Sigma).$

\begin{lemma}[Star Lemma, compare Prop.~4.24 of \cite{Swartz}] \label{l:starlemma} Let $\Sigma$ be a $(d-1)$-dimensional normal pseudomanifold with regular parameter space $M_{\Sigma}$. Let $v\in \Sigma_{(0)}$.
Suppose that $\lk_{\Sigma}(v)$ is a $K$-homology sphere and that
$M_{\lk_{\Sigma}(v)}$ is a regular parameter space for $\lk_{\Sigma}(v)$.
For all $m\leq d-1$, $\iota\colon A^m(\st_{\Sigma}(v))\to A^{m+1}(\Sigma)$ is injective.
\end{lemma}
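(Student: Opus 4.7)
The plan is to combine Poincar\'{e} duality on $\st_\Sigma(v)$ (inherited from $\lk_\Sigma(v)$ via the Cone Lemma) with injectivity of $\iota$ in the top graded piece. Given a nonzero $u\in A^m(\st_\Sigma(v))$ with $m\le d-1$, I will produce $\tilde w\in A^{d-1-m}(\Sigma)$ so that $\tilde w\cdot\iota(u)\ne 0$ in $A^d(\Sigma)$, which forces $\iota(u)\ne 0$.

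First I would handle the top degree $m=d-1$ directly. The Cone Lemma gives an isomorphism $A^*(\lk_\Sigma(v))\cong A^*(\st_\Sigma(v))$ sending $x_w\mapsto x_w$ for vertices $w$ of $\lk_\Sigma(v)$. Since $\lk_\Sigma(v)$ is a $(d-2)$-dimensional simplicial $K$-homology sphere with regular parameter space $M_{\lk_\Sigma(v)}$, Corollary~\ref{l:topdegree} shows that $A^{d-1}(\lk_\Sigma(v))$ is spanned by $x_\tau$ for any $(d-2)$-face $\tau$, and the same monomial spans $A^{d-1}(\st_\Sigma(v))$. Applying $\iota$, we get $\iota(x_\tau)=x_vx_\tau=x_\sigma$, where $\sigma=\{v\}\cup\tau$ is a $(d-1)$-face of $\Sigma$. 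By the volume computation, $\Vol(x_\sigma)=[\sigma]^{-1}$, which is nonzero by linear genericity. So $\iota(x_\tau)\ne 0$, proving that $\iota\colon A^{d-1}(\st_\Sigma(v))\to A^d(\Sigma)$ is injective.

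For general $m$, I would transfer the Poincar\'{e} duality theorem from $\lk_\Sigma(v)$ across the Cone Lemma isomorphism to obtain a nondegenerate pairing $A^m(\st_\Sigma(v))\times A^{d-1-m}(\st_\Sigma(v))\to A^{d-1}(\st_\Sigma(v))\cong K$. Since $u\ne 0$, there exists $w\in A^{d-1-m}(\st_\Sigma(v))$ with $wu\ne 0$. The quotient map $\pi_v\colon A^*(\Sigma)\to A^*(\st_\Sigma(v))$ is surjective, so I lift $w$ to some $\tilde w\in A^{d-1-m}(\Sigma)$. Using that $\iota$ is $A^*(\Sigma)$-linear and that the $A^*(\Sigma)$-module structure on $A^*(\st_\Sigma(v))$ is given by $\tilde w\cdot u=\pi_v(\tilde w)\cdot u=wu$, we get
\[
\tilde w\cdot\iota(u)=\iota\bigl(\pi_v(\tilde w)\cdot u\bigr)=\iota(wu),
\]
which is nonzero by the top-degree injectivity. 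Hence $\iota(u)\ne 0$.

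The main obstacle is the top-degree step: one must trace the identification of classes carefully across the Cone Lemma isomorphism to verify that the generator of $A^{d-1}(\st_\Sigma(v))$ really maps to the monomial $x_\sigma$ of a top face $\sigma\in\Sigma_{(d-1)}$ whose volume is computable and nonzero. Once this is in place, the rest is a formal ``Gorenstein + surjective restriction'' argument: Poincar\'{e} duality for the source plus injectivity in top degree, combined with the $A^*(\Sigma)$-module compatibility of $\iota$, upgrades injectivity from the top to every degree $m\le d-1$.
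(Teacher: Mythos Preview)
Your proposal is correct and follows essentially the same approach as the paper: first establish injectivity in the top degree $m=d-1$ by observing that $A^{d-1}(\st_\Sigma(v))$ is spanned by $x_\tau$ for a $(d-2)$-face $\tau$ and that $\iota(x_\tau)=x_\sigma$ is nonzero in $A^d(\Sigma)$; then use Poincar\'{e} duality on $\lk_\Sigma(v)\cong\st_\Sigma(v)$ together with the $A^*(\Sigma)$-linearity of $\iota$ to reduce the general case to the top-degree case. The only cosmetic differences are that you invoke $\Vol(x_\sigma)=[\sigma]^{-1}$ explicitly where the paper simply asserts $x_\sigma\ne 0$, and you phrase the lift of the duality partner via surjectivity of $\pi_v$ rather than via the $K[\Sigma]$-module structure.
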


\begin{proof}
  We first prove the injectivity for $m=d-1$. Since $\lk_{\Sigma}(v)$ is a normal pseudomanifold, by Corollary~\ref{l:topdegree}, $A^m(\lk_{\Sigma}(v))\cong A^m(\st_{\Sigma}(v))$ is generated by $x_\tau$ for any $(d-2)$-face $\tau$. Then $\iota(x_\tau)=x_vx_\tau=x_\sigma$ where $\sigma=\tau\cup \{v\}$. Because $\sigma$ is a $(d-1)$-dimensional face of $\lk_{\Sigma}(v)$, $x_\sigma$ is nonzero in $A^d(\Sigma)$.

  It suffices to prove this for homogeneous $a\in A^m(\lk_{\Sigma}(v))$ for all $m$. Since $A^*(\lk_{\Sigma}(v))$ obeys Poincar\'{e} duality, there exists $b\in A^{d-1-m}(\lk_{\Sigma}(v))$ such that $ab\neq 0$. We can represent $b$ as an element of $K[\Sigma]$, i.e., there is an element $b'\in K[\Sigma]$ such that $b=b'\cdot 1$ using the $K[\Sigma]$-module structure of $A^*(\lk_{\Sigma}(v))$. By the $K[\Sigma]$-linearity of $\iota$, we see
  $0\neq \iota(ab)=b'\cdot \iota(a)$.
  Hence $\iota(a)\neq 0$.
\end{proof}

\section{Generic parameter space and the main identity}

We now consider the case of the generic parameter space. Let $k$ be a field, and let $(a_{vi})$ be indeterminates indexed by $v\in \Sigma_{(0)}$ and $i\in \{1,\dots,d\}$. Let $K=k(a_{vi})$ be the field of rational functions in $a_{vi}$. Set $\ell_i=\sum_v a_{vi}x_v\in K[\Sigma]$ for $i\in\{1,\dots,d\}$. 
Let $M\coloneqq \Span(\ell_1,\dots,\ell_d)$. 
Use $\{\ell_1,\dots,\ell_d\}$ as a basis for $M$. Let $\{e_1,\dots,e_d\}\subset N=M^\vee$ be the dual basis, so we can write $h(v)=\sum a_{vi}e_i$.

For the above choice, $M$ is a regular parameter space. Indeed, one can verify that $\ell_1,\dots,\ell_d$ is a regular sequence by the discussion in \cite[1.5.10--1.5.12]{BrunsHerzog}. Because each $\ell_i$ is algebraically independent over $K_{i-1}\coloneqq k(\{a_{vj}\mid {j\leq i-1}\})$, it avoids the associated primes of $K_{i-1}[\Sigma]/(\ell_1,\dots,\ell_{i-1})$, thus of $K[\Sigma]/(\ell_1,\dots,\ell_{i-1})$. Hence, $\ell_i$ is not a zero divisor.

For distinct vertices $v\neq w$, we define a differential operator $\partial_v^w\colon K\to K$ by 
\[\partial_v^w\coloneqq \sum_i a_{wi}\frac{\partial}{\partial a_{vi}}.\]
It has the property that $\partial_v^w(k)=0$ and $\partial_v^w(fg)=\partial_v^w(f)g+f\partial_v^w(g)$.
Note that $\partial_v^w h(v)=h(w)$, and thus
\[\partial_v^w[\sigma]=
\begin{cases}
[\sigma_v^w] &\text{if }v\in \sigma,\ w\not\in\sigma\\
0 &\text{else}
\end{cases},\quad 
\partial_v^w[\sigma_u]=
\begin{cases}
[(\sigma_v^w)_u] &\text{if }v\in \sigma\setminus u,\ w\not\in\sigma\setminus \{u,v\}\\
\pm [\sigma_v] &\text{if }v\in \sigma\setminus u,\ w=u\\
0 &\text{else}.
\end{cases}\ 
\]

Given two ordered $k$-tuples of vertices of the same length, $T=(v_1,\dots,v_k)$ and $U=(w_1,\dots,w_k)$, we define
\[\partial_T^U=\partial_{v_1}^{w_1}\partial_{v_2}^{w_2}\dots\partial_{v_k}^{w_k}.\]
We give a proof attributed in \cite{APP} to Geva Yashe of an important identity on odd-dimensional pseudomanifolds.

\begin{prop} \label{p:mainidentityodd} Let $\Sigma$ be a $(d-1)$-dimensional oriented pseudomanifold for $d$ even. Let $\gamma,\tau\in \Sigma_{(d/2-1)}$ be disjoint faces contained in some $\sigma\in\Sigma_{(d-1)}$. Then for any ordering on the vertices of $\gamma$ and $\tau$,
\[\partial_\gamma^\tau \Vol(x_\tau^2)=\pm[\sigma]\Vol(x_\tau x_\gamma)^2.\]
For $\eta\in \Sigma_{(d/2-1)}$ not in $\st_{\Sigma}(\gamma)$, $\partial_\gamma^\tau\Vol(x_\eta^2)=0$.
\end{prop}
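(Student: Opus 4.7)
The plan is to expand both sides directly via the Courant-function description of $\Vol$ and to reduce the left-hand side to a single surviving summand.

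First I expand
\[\Vol(x_\tau^2) = \sum_{\sigma'\supseteq \tau}\frac{(\Psi_\tau)_{\sigma'}^2}{\chi_{\sigma'}[\sigma']} = \sum_{\sigma'\supseteq \tau}\frac{\prod_{v\in \tau}[\sigma'_v]}{[\sigma']\prod_{v\in \sigma'\setminus\tau}[\sigma'_v]},\]
using $(\Psi_v)_{\sigma'}=[\sigma'_v]/[\sigma']$ when $v\in\sigma'$ and the formula for $\chi_{\sigma'}$. The key reduction is that the $\sigma'$-th summand depends only on the coordinates $a_{ui}$ for $u\in\sigma'$, and each $\partial_v^w$ only adds dependence on the coordinates of $w$; since every $w\in \tau$ already lies in $\sigma'\supseteq\tau$, no iterated derivative can introduce dependence on a vertex outside $\sigma'$. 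Hence the $\sigma'$-summand survives $\partial_\gamma^\tau$ only when every vertex of $\gamma$ lies in $\sigma'$, and combined with $\tau\subseteq\sigma'$ and $|\tau\cup\gamma|=d=|\sigma'|$ this forces $\sigma'=\sigma=\tau\cup\gamma$.

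Next I evaluate $\partial_\gamma^\tau$ on the unique surviving summand. Pair the orderings so $\gamma=(u_1,\dots,u_{d/2})$ and $\tau=(w_1,\dots,w_{d/2})$ and apply the displayed formulas for $\partial_v^w[\sigma]$ and $\partial_v^w[\sigma_u]$: case-checking shows that $\partial_{u_i}^{w_i}$ annihilates $[\sigma]$ (because $w_i\in\sigma$), annihilates $[\sigma_{u_j}]$ for every $j$, and annihilates $[\sigma_{w_j}]$ for $j\neq i$, with the sole nonvanishing action $\partial_{u_i}^{w_i}[\sigma_{w_i}]=\pm[\sigma_{u_i}]$. Applying $\partial_{u_1}^{w_1}$ via the product rule therefore converts $[\sigma_{w_1}]$ in the numerator into $\pm[\sigma_{u_1}]$, which cancels against $[\sigma_{u_1}]$ in the denominator. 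Since the commuting operators $\{\partial_{u_i}^{w_i}\}$ preserve the structural pattern of the reduced expression (the same case analysis applies with $\sigma\setminus\{u_1,w_1\}$ in place of $\sigma$), iterating this chain cancellation through all $d/2$ steps yields $\partial_\gamma^\tau\Vol(x_\tau^2)=\pm 1/[\sigma]$. Since $\tau\cap\gamma=\varnothing$ and $\tau\cup\gamma=\sigma\in\Sigma_{(d-1)}$, we have $x_\tau x_\gamma=x_\sigma$, and a direct computation gives $\Vol(x_\sigma)=\pm 1/[\sigma]$, so $[\sigma]\Vol(x_\tau x_\gamma)^2=\pm 1/[\sigma]$, agreeing with $\partial_\gamma^\tau\Vol(x_\tau^2)$ up to sign.

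For the second assertion, if $\eta\notin \st_\Sigma(\gamma)$ then $\eta\cup\gamma\notin\Sigma$, so no top-dimensional face of $\Sigma$ contains both $\eta$ and $\gamma$. The support argument above then kills every summand of $\Vol(x_\eta^2)=\sum_{\sigma'\supseteq \eta}\cdots$ under $\partial_\gamma^\tau$, since no such $\sigma'$ contains $\gamma$. The main obstacle is simply the careful sign- and case-tracking in the derivative computation; since the proposition only asserts equality up to $\pm$, it suffices to confirm nonvanishing cancellation at each stage rather than computing signs exactly.
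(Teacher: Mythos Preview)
Your argument is correct and follows essentially the same route as the paper's proof: expand $\Vol(x_\tau^2)$ via Courant functions, use a support/dependence argument to isolate the single summand for $\sigma'=\sigma$, then apply the product-rule case analysis $\partial_{u_i}^{w_i}[\sigma_{w_i}]=\pm[\sigma_{u_i}]$ (all other factors annihilated) to reduce to $\pm[\sigma]^{-1}$, and finally rewrite this as $[\sigma]\Vol(x_\tau x_\gamma)^2$. The only quibble is the parenthetical ``the same case analysis applies with $\sigma\setminus\{u_1,w_1\}$ in place of $\sigma$'': the ambient $\sigma$ in the brackets $[\sigma_{\cdot}]$ does not actually shrink between steps --- only the index set of remaining numerator/denominator factors does --- but your case analysis (which you verified holds for every $\partial_{u_i}^{w_i}$ against every remaining $[\sigma]$, $[\sigma_{u_j}]$, $[\sigma_{w_j}]$) already justifies the iteration without any dimension reduction.
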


With a bit more care, we could write the signs, but they are not important for our purposes.

\begin{proof}
  Write $\gamma=(v_1,\dots,v_{d/2})$ and $\tau=(w_1,\dots,w_{d/2})$
   Because $(\Psi_v)_{\sigma'}=0$ unless $v\in\sigma'$ and thus $(\Psi_\tau)_{\sigma'}=0$ unless $\tau\subset\sigma'$, we have the identity 
  \[\Vol(x_\tau^2)=\sum_{\sigma'\in\Sigma_{(d-1)}} \frac{(\Psi_\tau)^2_{\sigma'}}{\chi_{\sigma'}[\sigma']}
    =\sum_{\substack{\sigma'\in\Sigma_{(d-1)}\\\sigma'\supset\tau}} \frac{(\Psi_\tau)^2_{\sigma'}}{(\Psi_{\sigma'})_{\sigma'}[\sigma']}\\
    =\sum_{\substack{\sigma'\in\Sigma_{(d-1)}\\\sigma'\supset\tau}} \frac{(\Psi_\tau)_{\sigma'}}{(\Psi_{\sigma'\setminus\tau})_{\sigma'}[\sigma']}.\]
  One observes that for $\sigma'\supset \tau$, $\partial_{v_j}^{w_j}[\sigma']=0$: if $v_j\not\in\sigma'$, $[\sigma']$ does not involve any $a_{v_j i}$; if $v_j\in\sigma'$, then  $\partial_{v_j}^{w_j}[\sigma']$
  is a wedge with two copies of $w_j$ and so is zero. Also, $\partial_{v_j}^{w_j}[\sigma'_u]=0$ unless $v_j\in \sigma'\setminus u$. Therefore, the only terms that contribute to $\partial_{\gamma}^\tau\left(\Vol(x_\tau)^2\right)$ are those for which $\sigma'\supset\gamma$, i.e., those for which $\sigma'=\sigma$.
  Hence,
  \[\partial_\gamma^\tau\Vol(x_\tau^2)=\partial_\gamma^\tau\left(\frac{(\Psi_\tau)_{\sigma}}{(\Psi_\gamma)_{\sigma}[\sigma]}\right)=\partial_\gamma^\tau\left(\frac{\prod_{w\in\tau}[\sigma_w]}{[\sigma]\prod_{v\in\gamma}[\sigma_v]}\right)=\pm\frac{\prod_{v\in\gamma}[\sigma_v]}{[\sigma]\prod_{v\in\gamma}[\sigma_v]}=\pm[\sigma]^{-1}.\]
  For the second to last equality, we used the observation that 
  \[\partial_{v_j}^{w_j}[\sigma_{w_k}]=
  \begin{cases}
      \pm[\sigma_{v_j}] &\text{if }j=k\\
      0 &\text{else.}
   \end{cases}
  \]
  We conclude by noting 
  \[[\sigma]^{-1}=\Vol(x_\sigma)=[\sigma]\Vol(x_\sigma)^2=[\sigma]\Vol(x_\gamma x_\tau)^2.\]
  For the second equality, observe
      \[\partial_\gamma^\tau\Vol(x_\eta^2)=\partial_\gamma^\tau\left(\sum_{\sigma'\in\Sigma_{(d-1)}} \frac{(\Psi_\eta^2)_{\sigma'}}{\chi_{\sigma'}[\sigma']}\right).\]
  The only summands which contribute are those for which $\sigma'\supset \gamma$. For those $\sigma'$, $\Psi_\eta|_{\sigma'}=0$.
\end{proof}

\begin{example} \label{e:polygon3}
    Returning to Example~\ref{e:polygon2}, we have
    \[\partial_{v_{j-1}}^{v_j}\Vol(x_{v_j}^2)=-a_{v_j1}^{-1}a_{v_j1}[v_{j-1}v_j]^{-1}=-[v_{j-1}v_j]\Vol(x_{v_{j-1}}x_{v_j})^2.\]
    Similarly, $\partial_{v_{j+1}}^{v_j}\Vol(x_{v_j}^2)=-[v_{j}v_{j+1}]\Vol(x_{v_{j}}x_{v_{j+1}})^2$.
\end{example}

An identical argument gives the analogous results in the even dimensional case:
\begin{prop} \label{p:mainidentityeven} Let $\Sigma$ be a $(d-1)$-dimensional oriented pseudomanifold for $d$ odd. Let $\gamma,\tau\in \Sigma_{((d-1)/2-1)}$ be disjoint faces of $\sigma\in\Sigma_{(d-1)}$. 
Let $p$ be the unique element of $\sigma\setminus (\gamma\cup \tau)$.
Then, for any ordering of the vertices of $\gamma$ and $\tau$,
\[\partial_\gamma^\tau \Vol(x_px_\tau^2)=\pm[\sigma]\Vol(x_px_\tau x_\gamma)^2.\]

For $\eta\in \Sigma_{((d-1)/2-1)}$ disjoint from $\st_{\Sigma}(\gamma\cup\{p\})$, $\partial_\gamma^\tau\Vol(x_px_\eta^2)=0$.
\end{prop}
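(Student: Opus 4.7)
The proof should mirror the odd-dimensional argument (Proposition~\ref{p:mainidentityodd}) essentially verbatim, with the ``extra'' vertex $p\in\sigma$ carrying an additional Courant factor that just passes through the differentiation. My plan is to first rewrite $\Vol(x_px_\tau^2)$ as a sum over top-dimensional faces using piecewise polynomials, then show that after applying $\partial_\gamma^\tau$ only the summand for $\sigma'=\sigma$ survives, and finally evaluate that single term.

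Concretely, write $\gamma=(v_1,\dots,v_{(d-1)/2})$ and $\tau=(w_1,\dots,w_{(d-1)/2})$. Since $(\Psi_v)_{\sigma'}=0$ when $v\notin\sigma'$, the formula
\[\Vol(x_px_\tau^2)=\sum_{\sigma'\supset\{p\}\cup\tau}\frac{(\Psi_p\Psi_\tau^2)_{\sigma'}}{\chi_{\sigma'}[\sigma']}\]
involves only $\sigma'$ containing $\{p\}\cup\tau$. Expanding the $\Psi$'s as in the odd case and cancelling the common factor of $[\sigma'_p]\prod_{w\in\tau}[\sigma'_w]$ with part of $\chi_{\sigma'}$, each summand reduces to $\prod_{w\in\tau}[\sigma'_w]\big/\bigl([\sigma']\prod_{u\in\sigma'\setminus(\tau\cup\{p\})}[\sigma'_u]\bigr)$ (this is a short bookkeeping calculation using $|\tau|=(d-1)/2$ and $|\sigma'|=d$).

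Next I argue that $\partial_\gamma^\tau$ kills every summand with $\sigma'\not\supset\gamma$: the operator $\partial_{v_j}^{w_j}=\sum_i a_{w_ji}\,\partial/\partial a_{v_ji}$ annihilates any expression not involving $a_{v_ji}$, and if $v_j\notin\sigma'$ then neither $[\sigma']$ nor any $[\sigma'_u]$ involves $a_{v_ji}$. Combined with the constraint $\sigma'\supset\{p\}\cup\tau$, the only survivor is $\sigma'=\gamma\cup\tau\cup\{p\}=\sigma$. On this single term, apply the Leibniz rule: $\partial_{v_j}^{w_j}$ kills $[\sigma]$ (since $v_j,w_j\in\sigma$ produces a duplicated $h(w_j)$ in the wedge), kills each $[\sigma_v]$ with $v\in\gamma\cup\{p\}$ (same reason), and kills $[\sigma_{w_k}]$ for $k\neq j$; the only nonzero action is $\partial_{v_j}^{w_j}[\sigma_{w_j}]=\pm[\sigma_{v_j}]$. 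Thus every surviving term comes from a perfect pairing of $\partial_{v_j}^{w_j}$ with $[\sigma_{w_j}]$, yielding
\[\partial_\gamma^\tau\Vol(x_px_\tau^2)=\pm\,\frac{\prod_{v\in\gamma}[\sigma_v]}{[\sigma]\prod_{v\in\gamma}[\sigma_v]}=\pm[\sigma]^{-1}.\]
Finally, since $\sigma=\gamma\cup\tau\cup\{p\}\in\Sigma_{(d-1)}$, one has $\Vol(x_px_\gamma x_\tau)=\Vol(x_\sigma)=[\sigma]^{-1}$, so $[\sigma]\Vol(x_px_\gamma x_\tau)^2=[\sigma]^{-1}$, matching the left-hand side up to sign.

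For the second assertion, the same annihilation argument shows that $\partial_\gamma^\tau$ leaves only summands indexed by $\sigma'\supset\gamma$, and the original sum only involves $\sigma'\supset\{p\}\cup\eta$; but $\eta\notin\st_\Sigma(\gamma\cup\{p\})$ means no top-dimensional face contains $\gamma\cup\{p\}\cup\eta$, so the sum is empty and the derivative vanishes. The only real obstacle is the combinatorial bookkeeping of the Leibniz/wedge vanishings and ensuring that the extra $\Psi_p$ factor in $(\Psi_p\Psi_\tau^2)_\sigma$ correctly cancels against the corresponding $[\sigma_p]$ inside $\chi_\sigma$; as in the odd case, I will track signs only to the extent needed to know they are well-defined, leaving their precise values implicit per the remark following the statement.
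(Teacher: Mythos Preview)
Your proposal is correct and follows exactly the approach the paper intends; indeed, the paper's own ``proof'' of this proposition is simply the sentence ``An identical argument gives the analogous results in the even dimensional case,'' and what you have written is precisely that identical argument spelled out. One trivial nitpick: in your Leibniz step the simplified $\sigma$-summand is $\prod_{w\in\tau}[\sigma_w]\big/\bigl([\sigma]\prod_{v\in\gamma}[\sigma_v]\bigr)$, so there is no factor $[\sigma_p]$ present and your parenthetical ``$v\in\gamma\cup\{p\}$'' can be trimmed to ``$v\in\gamma$''.
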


\section{Anisotropy}

Now, we prove anisotropy when $k$ is a field of characteristic $2$, so we can ignore signs. Let $K=k(a_{vi})$, and take the generic regular parameter space as in the previous section. We strengthen Proposition~\ref{p:mainidentityodd} by replacing $x_\tau$ by an arbitrary $u\in A^{d/2}(\Sigma)$. We will make critical use of the observation that for any $\lambda\in K$, $\partial_{u_i}^{v_i}\lambda^2=0$.

\begin{prop}  Let $\Sigma$ be a $(d-1)$-dimensional oriented pseudomanifold for $d$ even. Let $\gamma,\tau\in \Sigma_{(d/2-1)}$ be disjoint faces contained in $\sigma\in\Sigma_{(d-1)}$. Then, for any ordering of the vertices of $\gamma$ and $\tau$, and for any $a\in A^{d/2}(\Sigma)$,
\[\partial_\gamma^\tau \Vol(a^2)=[\sigma]\Vol(a x_\gamma)^2.\]
\end{prop}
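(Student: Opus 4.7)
The plan is to reduce to the monomial identity of Proposition~\ref{p:mainidentityodd} by representing $a$ through the displacement lemma. Applying Lemma~\ref{l:displacement}, I would write
\[a = \sum_{\eta \in U_\gamma^\tau} a_\eta\, x_\eta\]
for coefficients $a_\eta \in K$, noting that this form makes $\tau$ the unique element of $\lk_\Sigma(\gamma)_{(d/2-1)}$ occurring in the sum. Since we are working in characteristic $2$, every cross term in the expansion of $a^2$ carries a factor of $2$ and vanishes, so $a^2 = \sum_\eta a_\eta^2\, x_\eta^2$ and therefore $\Vol(a^2) = \sum_\eta a_\eta^2\, \Vol(x_\eta^2)$.

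Next I would apply $\partial_\gamma^\tau$ to this sum using two characteristic-$2$ facts. First, for any $\lambda \in K$ and any distinct vertices $v, w$, $\partial_v^w(\lambda^2) = 2\lambda\,\partial_v^w(\lambda) = 0$; by a short induction on the length of $\gamma$ via the Leibniz rule, this gives
\[\partial_\gamma^\tau\bigl(a_\eta^2\, \Vol(x_\eta^2)\bigr) = a_\eta^2\, \partial_\gamma^\tau \Vol(x_\eta^2).\]
Second, Proposition~\ref{p:mainidentityodd} asserts that $\partial_\gamma^\tau \Vol(x_\eta^2) = 0$ whenever $\eta \notin \st_\Sigma(\gamma)$. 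For $\eta \in U_\gamma^\tau$ with $\eta \neq \tau$, we have $\eta \cap \gamma = \varnothing$ and $\eta \notin \lk_\Sigma(\gamma)$, which together force $\eta \notin \st_\Sigma(\gamma)$. So only the $\eta = \tau$ term survives, and Proposition~\ref{p:mainidentityodd} (with signs absorbed in characteristic $2$) evaluates it to $a_\tau^2\, [\sigma]\, \Vol(x_\tau x_\gamma)^2$.

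It then remains to identify this with $[\sigma]\, \Vol(a x_\gamma)^2$. For $\eta \in U_\gamma^\tau \setminus \{\tau\}$, we just observed that $\eta \cup \gamma$ is not a face of $\Sigma$, so $x_\eta x_\gamma \in I_\Sigma$ and hence vanishes in $K[\Sigma]$. Therefore $a x_\gamma = a_\tau\, x_\tau x_\gamma$ in $A^*(\Sigma)$, and squaring gives $\Vol(a x_\gamma)^2 = a_\tau^2\, \Vol(x_\tau x_\gamma)^2$. Combining these steps yields the claimed identity.

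The main obstacle is really the choice of representation for $a$ rather than any calculation: any generic expansion of $a$ as a sum of square-free monomials would leave many faces in $\lk_\Sigma(\gamma)$ contributing nonzero $\partial_\gamma^\tau \Vol(x_\eta^2)$ terms on the left, and the two sides of the identity would fail to match cleanly. The specific $U_\gamma^\tau$-form furnished by Lemma~\ref{l:displacement} is engineered precisely so that exactly one $\eta$ survives on each side, and they correspond.
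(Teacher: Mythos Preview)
Your proof is correct and follows essentially the same route as the paper's own argument: represent $a$ via Lemma~\ref{l:displacement} in $U_\gamma^\tau$-form, use characteristic~$2$ to kill cross terms and to pass the squared coefficients through $\partial_\gamma^\tau$, invoke Proposition~\ref{p:mainidentityodd} so that only the $\eta=\tau$ summand survives, and then observe that $x_\eta x_\gamma=0$ for $\eta\in U_\gamma^\tau\setminus\{\tau\}$ to identify the result with $[\sigma]\Vol(ax_\gamma)^2$. Your write-up simply makes explicit the justifications (Leibniz rule, why $\eta\notin\st_\Sigma(\gamma)$, why $x_\eta x_\gamma\in I_\Sigma$) that the paper leaves implicit in its one-line display.
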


\begin{proof}
    By Lemma~\ref{l:displacement} with $m=\frac{d}{2}$, we may suppose $a=\sum_{\eta\in U_\gamma^\tau}\lambda_\eta x_\eta$. Then, by Proposition~\ref{p:mainidentityodd},
    \begin{multline*}
      \partial_\gamma^\tau\Vol(a^2)=\partial_\gamma^\tau\Vol\left(\sum_{\eta\in U_\gamma^\tau}\lambda_\eta^2 x_\eta^2\right)=
      \left(\sum_{\eta\in U_\gamma^\tau}\lambda_\eta^2 \partial_\gamma^\tau\Vol(x_\eta^2)\right)\\
      =\lambda_\tau^2\partial_{\gamma}^\tau\Vol(x_\tau^2)
      =[\sigma]\Vol(\lambda_\tau x_\tau x_\gamma)^2
      =[\sigma]\Vol\left(\sum_{\eta\in U_\gamma^\tau}\lambda_\eta x_\eta x_\gamma\right)^2=[\sigma]\Vol(ax_\gamma)^2.\qedhere
    \end{multline*}
\end{proof}

\begin{corollary} \label{c:oddisotropy}
    Let $\Sigma$ be a $(d-1)$-dimensional $K$-homology sphere with $d$ even. Let $a\in A^{d/2}(\Sigma)$ be nonzero. Then,
  $a^2\neq 0$.
\end{corollary}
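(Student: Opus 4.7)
The plan is to prove the contrapositive: assuming $\Vol(a^2)=0$, deduce $a=0$. Since $A^d(\Sigma)$ is one-dimensional and $\Vol$ is a nonzero linear functional on it, $a^2=0$ is equivalent to $\Vol(a^2)=0$, so this reduction is harmless.

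The key step is to extract from $\Vol(a^2)=0$ the stronger-looking conclusion that $\Vol(a x_\gamma)=0$ for \emph{every} $\gamma\in\Sigma_{(d/2-1)}$. Given such a $\gamma$, purity of the pseudomanifold $\Sigma$ lets us pick a top-dimensional face $\sigma\supset \gamma$, and then $\tau\coloneqq \sigma\setminus\gamma\in\Sigma_{(d/2-1)}$ is disjoint from $\gamma$ and contained in $\sigma$. So the hypothesis of the preceding proposition is met, and it yields
\[
0 \;=\; \partial_\gamma^\tau\Vol(a^2) \;=\; [\sigma]\,\Vol(a x_\gamma)^2.
\]
By linear genericity $[\sigma]\neq 0$, and $K$ is a field, so $\Vol(a x_\gamma)=0$.

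To finish, I would invoke the Displacement Lemma with $\gamma=\varnothing$ to represent an arbitrary $b\in A^{d/2}(\Sigma)$ as $\sum_\eta \lambda_\eta x_\eta$ with $\eta$ ranging over $\Sigma_{(d/2-1)}$. Linearity then gives
\[
\Vol(a b) \;=\; \sum_\eta \lambda_\eta \Vol(a x_\eta) \;=\; 0
\]
for every $b\in A^{d/2}(\Sigma)$. Poincar\'{e} duality asserts that the pairing $(a,b)\mapsto \Vol(ab)$ is non-degenerate on $A^{d/2}(\Sigma)\times A^{d/2}(\Sigma)$, which forces $a=0$, contradicting the standing assumption.

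The real labor was already done in the preceding proposition (which is where characteristic $2$ enters, through $\partial_v^w(\lambda^2)=0$ and the vanishing of cross terms in $(\sum \lambda_\eta x_\eta)^2$). The only conceivable obstacle in assembling the corollary is checking that for \emph{every} $(d/2-1)$-face $\gamma$ we can produce a disjoint $\tau$ sharing a top face with it, and this is immediate from purity of the pseudomanifold.
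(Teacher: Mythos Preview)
Your proof is correct and is essentially the contrapositive of the paper's argument. The paper argues directly: from $a\neq 0$, Poincar\'{e} duality and the displacement lemma produce a single $\gamma\in\Sigma_{(d/2-1)}$ with $ax_\gamma\neq 0$, and then the preceding proposition forces $\partial_\gamma^\tau\Vol(a^2)\neq 0$, hence $\Vol(a^2)\neq 0$. You instead show that $\Vol(a^2)=0$ kills $\Vol(ax_\gamma)$ for \emph{every} $\gamma$ and then invoke non-degeneracy; the ingredients and the logical content are identical.
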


\begin{proof}
    By Poincar\'{e} duality, there exists $b\in A^{d/2}(\Sigma)$ such that $ab\neq 0$. By Lemma~\ref{l:displacement}, we can write $b=\sum_{\eta\in \Sigma_{(d/2-1)}} a_\eta x_\eta$.  Hence there exists $\gamma\in \Sigma_{(d/2-1)}$ such that $ax_\gamma\neq 0$. Pick $\sigma\in \Sigma_{(d-1)}$ containing $\gamma$. Let $\tau\coloneqq\sigma\setminus\gamma$. Then,
    $\partial_\gamma^\tau \Vol(a^2)=[\sigma]\Vol(ax_\gamma)^2\neq 0.$
\end{proof}

Now, we consider the even-dimensional case.

\begin{prop}  Let $\Sigma$ be a $(d-1)$-dimensional oriented pseudomanifold for $d$ odd. Let $\gamma,\tau\in \Sigma_{((d-1)/2-1)}$ be disjoint faces contained in $\sigma\in\Sigma_{(d-1)}$. 
Let $p$ be the unique element of $\sigma\setminus (\gamma\cup \tau)$.
Then for any ordering on the vertices of $\gamma$ and $\tau$, and for any $a\in A^{(d-1)/2}(\Sigma)$,
\[\partial_\gamma^\tau \Vol(x_pa^2)=[\sigma]\Vol(a x_p x_\gamma)^2.\]
\end{prop}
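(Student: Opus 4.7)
The plan is to mirror the structure of the proof of the preceding odd-dimensional proposition, substituting Proposition~\ref{p:mainidentityeven} for Proposition~\ref{p:mainidentityodd} and incorporating the extra factor $x_p$. The key observation is that since $(d+1)/2 = d - (d-1)/2$, the face $\gamma \cup \{p\}$ is of the correct size to apply the displacement lemma to an element of $A^{(d-1)/2}(\Sigma)$.

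First, I would verify that $\tau$ satisfies the hypothesis of the second part of Lemma~\ref{l:displacement} relative to $\gamma' := \gamma \cup \{p\}$: since $\tau$ is disjoint from $\gamma$ and from $\{p\}$ (as $p \in \sigma \setminus (\gamma \cup \tau)$), and since $\tau \cup \gamma \cup \{p\} = \sigma \in \Sigma$, we have $\tau \in \lk_\Sigma(\gamma \cup \{p\})_{((d-1)/2 - 1)}$. Applying Lemma~\ref{l:displacement}, I may write
\[
a = \sum_{\eta \in U_{\gamma \cup \{p\}}^\tau} \lambda_\eta x_\eta,
\]
where each $\eta$ is a $((d-1)/2-1)$-face disjoint from $\gamma \cup \{p\}$, and where $\tau$ is the unique such $\eta$ lying in $\lk_\Sigma(\gamma \cup \{p\})$.

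Next, working in characteristic $2$, $a^2 = \sum_\eta \lambda_\eta^2 x_\eta^2$, and the Leibniz rule combined with $\partial_v^w(\lambda^2) = 2\lambda\,\partial_v^w(\lambda) = 0$ gives
\[
\partial_\gamma^\tau \Vol(x_p a^2) = \sum_{\eta \in U_{\gamma \cup \{p\}}^\tau} \lambda_\eta^2\, \partial_\gamma^\tau \Vol(x_p x_\eta^2).
\]
By the second clause of Proposition~\ref{p:mainidentityeven}, the summand indexed by $\eta$ vanishes whenever $\eta \cup \gamma \cup \{p\}$ is not a face of $\Sigma$; by construction of $U_{\gamma \cup \{p\}}^\tau$, this is the case for every $\eta \neq \tau$. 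Hence only $\eta = \tau$ survives, and Proposition~\ref{p:mainidentityeven} gives
\[
\partial_\gamma^\tau \Vol(x_p a^2) = \lambda_\tau^2\, [\sigma]\, \Vol(x_p x_\tau x_\gamma)^2.
\]

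The last step is to recognize $\lambda_\tau \Vol(x_p x_\tau x_\gamma) = \Vol(a x_p x_\gamma)$. For each $\eta \neq \tau$ appearing in the displacement of $a$, the set $\eta \cup \gamma \cup \{p\}$ has size $d$ and is not a face of $\Sigma$ (otherwise it would be a top-dimensional face with complement in $\gamma \cup \{p\}$ equal to $\eta \in \lk_\Sigma(\gamma \cup \{p\})$, contradicting the uniqueness of $\tau$). Thus $x_p x_\eta x_\gamma = 0$ in $K[\Sigma]$ for such $\eta$, and so $\Vol(a x_p x_\gamma) = \lambda_\tau \Vol(x_p x_\tau x_\gamma)$. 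Squaring and combining yields the desired identity $\partial_\gamma^\tau \Vol(x_p a^2) = [\sigma]\, \Vol(a x_p x_\gamma)^2$.

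The argument is entirely parallel to the odd-dimensional case, and no step presents a real obstacle beyond bookkeeping; the only mild subtlety is matching face dimensions so that $\gamma \cup \{p\}$ is the correct size to serve as the ``avoidance face'' in the displacement lemma, and verifying that $\tau$ indeed lies in $\lk_\Sigma(\gamma \cup \{p\})$.
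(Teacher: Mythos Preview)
Your proof is correct and is precisely the expanded version of the paper's own argument: the paper applies Lemma~\ref{l:displacement} with $m=(d-1)/2$ and avoidance face $\gamma\cup\{p\}$ to write $a=\sum_{\eta\in U_{\gamma\cup\{p\}}^\tau}\lambda_\eta x_\eta$, then says the rest follows as in the preceding proposition using Proposition~\ref{p:mainidentityeven}. You have carried out exactly those steps, including the verification that $\tau\in\lk_\Sigma(\gamma\cup\{p\})_{((d-1)/2-1)}$ and that $x_p x_\eta x_\gamma=0$ for $\eta\neq\tau$.
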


\begin{proof}
  By Lemma~\ref{l:displacement} with $m=\frac{d-1}{2}$, we may suppose $a=\sum_{\eta\in U_{\gamma\cup\{p\}}^\tau}\lambda_\eta x_\eta$. Then, the proof proceeds as in Proposition~\ref{p:mainidentityeven}.   
\end{proof}

\begin{corollary} \label{c:evenisoptropy}
    Let $\Sigma$ be a $(d-1)$-dimensional $K$-homology sphere with $d$ odd. Let $a\in A^{(d-1)/2}$ be nonzero. Then,
  $a^2\neq 0$.
\end{corollary}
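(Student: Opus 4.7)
The plan is to run the same argument as Corollary~\ref{c:oddisotropy}, but with one extra vertex $p$ inserted so that the degree matches. Since $a \in A^{(d-1)/2}(\Sigma)$, the product $a^2$ lives in $A^{d-1}(\Sigma)$, one step short of the top degree; to detect it via $\Vol$ we will multiply by a single $x_p$ and invoke the even-dimensional main identity from the preceding proposition.

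First, by Poincar\'e duality there exists $b \in A^{(d+1)/2}(\Sigma)$ with $ab \neq 0$ in $A^d(\Sigma)$. Applying Lemma~\ref{l:displacement} with $m = (d+1)/2$, we may write $b$ as a $K$-linear combination of square-free monomials $x_\eta$ with $\eta \in \Sigma_{((d-1)/2)}$, and at least one such $\eta$ must satisfy $a x_\eta \neq 0$. Fix such an $\eta$, choose any vertex $p \in \eta$, and set $\gamma := \eta \setminus \{p\} \in \Sigma_{((d-1)/2 - 1)}$. Since $\Sigma$ is a pseudomanifold, the face $\eta$ is contained in some top-dimensional face $\sigma \in \Sigma_{(d-1)}$; let $\tau := \sigma \setminus (\gamma \cup \{p\})$, which then lies in $\Sigma_{((d-1)/2-1)}$ and is disjoint from $\gamma$, with $p$ the unique element of $\sigma \setminus (\gamma \cup \tau)$. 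Thus $\gamma, \tau, p, \sigma$ satisfy the hypotheses of the preceding proposition.

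Applying that proposition to our element $a$ yields
\[\partial_\gamma^\tau \Vol(x_p a^2) = [\sigma]\, \Vol(a x_p x_\gamma)^2 = [\sigma]\, \Vol(a x_\eta)^2.\]
By linear genericity $[\sigma] \neq 0$, and since $A^d(\Sigma)$ is one-dimensional and $\Vol$ is a nonzero map on it, the assumption $a x_\eta \neq 0$ in $A^d(\Sigma)$ forces $\Vol(a x_\eta) \neq 0$. Hence the right-hand side is nonzero, so $\partial_\gamma^\tau \Vol(x_p a^2) \neq 0$; in particular $\Vol(x_p a^2)$ is a nonzero polynomial. Therefore $x_p a^2 \neq 0$ in $A^d(\Sigma)$, and so $a^2 \neq 0$ in $A^{d-1}(\Sigma)$, as required.

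The only nontrivial point is ensuring that the faces $\gamma, \tau$ and the vertex $p$ arising from the displacement-lemma expansion of $b$ can simultaneously be fit into a common top-dimensional face $\sigma$ on which the main identity applies; this is immediate because $\eta = \gamma \cup \{p\}$ is itself a face of $\Sigma$ and every face of a purely $(d-1)$-dimensional pseudomanifold extends to a facet. Everything else is a transcription of the odd-$d/2$ argument, with the observation that the extra factor of $x_p$ is precisely what the even version of the main identity is designed to absorb.
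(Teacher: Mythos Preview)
Your proof is correct and follows essentially the same route as the paper's: use Poincar\'e duality to find a face $\eta$ with $ax_\eta\neq 0$, peel off a vertex $p$ to write $\eta=\gamma\cup\{p\}$, complete to a facet $\sigma$ with complementary face $\tau$, and apply the even-dimensional main identity to conclude $\partial_\gamma^\tau\Vol(x_pa^2)\neq 0$. The only (harmless) slip is calling $\Vol(x_pa^2)$ a ``polynomial''---it is an element of $K=k(a_{vi})$, hence a rational function in the $a_{vi}$, but of course a nonzero $\partial_\gamma^\tau$-derivative still forces it to be nonzero.
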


\begin{proof}
    Find $b\in A^{(d+1)/2}$ such that $ab\neq 0$. Hence, there exists $\eta\in \Sigma_{((d+1)/2-1)}$ with $ax_\eta\neq 0$. Pick $p\in\eta,$ and set $\gamma=\eta\setminus\{p\}$. Pick $\sigma\in\Sigma_{(d-1)}$ containing $\eta$, and set $\tau=\sigma\setminus\eta$.
    Then, 
    \[\partial_\gamma^\tau\Vol(x_p a^2)=\Vol(a x_p x_\gamma)^2=\Vol(ax_\eta)^2\neq 0. \qedhere \]
\end{proof}

\begin{remark}
  The differential operator $\partial_{\gamma}^{\tau}$ is the composition of operators $\partial_{v_i}^{w_i}$. These can be thought of generators of vector fields on the realization space of $\Sigma$ over $K$, i.e., the space of all maps $h\colon \Sigma^{(0)}\to K^d$. In this case, $\partial_{v_i}^{w_i}$ moves the vertex $v_i$ in the direction of $w_i$. Simplicial complexes have big realization spaces; more general polyhedral complexes, whose combinatorics require vertices lie in particular linear subspaces, have smaller realization spaces, accordingly with fewer vector fields. It would be interesting to understand how the presence or lack of anisotropy constrains realization spaces.
\end{remark}

\section{Proof of weak Lefschetz}

Let $k$ be a field of characteristic $2$. We work with a generic linear parameter space over a purely transcendental extension of $k$. Our main result is the following Weak Lefschetz Theorem.

\begin{theorem} \label{t:weaklefschetz}
    Let $\Sigma$ be a $(d-1)$-dimensional $k$-homology sphere.  Then, there is purely transcendental extension $K$ of $k$ and an element $\ell\in A^1(\Sigma)$ such that for $m<(d-1)/2$, the multiplication map $\ell\cdot \colon A^m(\Sigma)\to A^{m+1}(\Sigma)$ is injective.
\end{theorem}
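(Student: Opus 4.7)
Adjoin to $K = k(a_{vi})$ fresh indeterminates $b_v$, one per vertex of $\Sigma$, and take $\ell = \sum_v b_v x_v$; the ground field remains a purely transcendental extension of $k$. I induct on $d$, with base cases $d \leq 3$ trivial because only $m = 0$ is in the claimed range and the generic $\ell$ is nonzero in $A^1$. For the inductive step, suppose toward a contradiction that $\ell u = 0$ for some nonzero $u \in A^m(\Sigma)$ with $m < (d-1)/2$. The plan is to show $x_w u = 0$ in $A^{m+1}(\Sigma)$ for every $w \in \Sigma_{(0)}$; once this is established, $A^1 \cdot u = 0$, and since $A^*(\Sigma)$ is generated in degree one, $u \cdot A^{d-m} = 0$, so Poincar\'{e} duality forces $u = 0$---contradiction.

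The proof that $x_w u = 0$ splits according to the position of $m+1$. \emph{Below the middle} ($m + 1 < \lfloor d/2 \rfloor$): I pass to the link of $w$. The Cone Lemma (Lemma~\ref{l:conelemma}) gives $A^*(\st_\Sigma(w)) \cong A^*(\lk_\Sigma(w))$, and $\ell u = 0$ descends to a relation $\bar\ell \cdot \bar u = 0$ there, where a direct change of variables shows $\bar\ell$ is still a generic linear form on the link. The link is a $(d-2)$-dimensional homology sphere and $m$ is strictly below its middle, so the inductive hypothesis yields $\bar u = 0$; the Star Lemma (Lemma~\ref{l:starlemma}) then gives $x_w u = 0$ in $A^{m+1}(\Sigma)$.

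\emph{At the middle} ($m + 1 = \lfloor d/2 \rfloor$): I use anisotropy directly. Multiplying $\ell u = 0$ by $u$ gives $\ell u^2 = 0$, which lies in $A^{d-1}$ when $d$ is even and in $A^{d-2}$ when $d$ is odd. For $d$ even, pairing in the Poincar\'{e} pairing against $x_w \in A^1$ yields
\[
0 = \lambda(\ell u^2 x_w) = \sum_v b_v \, \lambda(x_v x_w u^2),
\]
and algebraic independence of the $b_v$ forces each $\lambda(x_v x_w u^2)$ to vanish; specializing $v = w$ gives $\lambda((x_w u)^2) = 0$, whence $(x_w u)^2 = 0$ in $A^d$, and Corollary~\ref{c:oddisotropy} forces $x_w u = 0$ in $A^{d/2}$. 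For $d$ odd the argument is parallel: pair against $x_v x_w \in A^2$, extract $(x_w u)^2 = 0$ in $A^{d-1}$ by the same coefficient extraction, and invoke Corollary~\ref{c:evenisoptropy}.

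The main obstacle is making the algebraic-independence step rigorous, since a priori $u$ may have coefficients in $K(b_v)$ rather than in $k(a_{vi})$. This is handled by clearing denominators and writing $u$ as a polynomial in the $b_v$ with coefficients in $A^m(\Sigma)$ over $k(a_{vi})$, then replacing $u$ by a nonzero $b$-coefficient that still lies in the kernel of $\ell\cdot$, to which the coefficient extraction applies cleanly. Everything else is an assembly of tools already developed: the displacement lemma (Lemma~\ref{l:displacement}), the Cone and Star Lemmas, and Poincar\'{e} duality.
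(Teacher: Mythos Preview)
Your approach is genuinely different from the paper's. The paper does not induct on $d$ or pass to links; instead it suspends: identifying $A^*(\Sigma)\cong A^*(\st_{S\Sigma}(v_+))$ via the Cone Lemma, it takes $\ell$ to be the image of $x_{v_+}$. If $x_{v_+}a=0$ in $A^*(\st_{S\Sigma}(v_+))$, then $\iota(a)^2=x_{v_+}^2\tilde a^2=\iota(x_{v_+}a^2)=0$ in $A^*(S\Sigma)$, and anisotropy in the suspension (one dimension higher) forces $\iota(a)=0$; the Star Lemma then gives $a=0$. A short Poincar\'e-duality argument reduces everything to the single degree $m=\lfloor(d-1)/2\rfloor$, so no induction is needed. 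The suspension trick buys you exactly the extra variable you are trying to manufacture with the $b_v$'s.

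Your argument has a real gap in the ``main obstacle'' paragraph. The claim that one may replace $u$ by a nonzero $b$-coefficient $u_\alpha$ \emph{that still lies in the kernel of $\ell\cdot$} is false in general: for instance, if $\ell=b_1x_1+b_2x_2$ acts on a two-dimensional $A^m$ with $\ker(\ell\cdot)$ spanned by $b_2y_1-b_1y_2$, neither $b$-coefficient lies in the kernel. What actually rescues the coefficient extraction is the characteristic-$2$ Frobenius, the same mechanism the paper exploits in Section~7. After clearing denominators write $u=\sum_\alpha b^\alpha u_\alpha$ with $u_\alpha\in A^m$ over $K=k(a_{vi})$; then $u^2=\sum_\alpha b^{2\alpha}u_\alpha^2$, so
\[
0=\lambda(x_w\,\ell\,u^2)=\sum_{v}\sum_{\alpha} b_v\,b^{2\alpha}\,\lambda(x_vx_wu_\alpha^2),
\]
and the monomials $b_vb^{2\alpha}$ are pairwise distinct (exactly one exponent is odd, pinning down $v$, and then $\alpha$). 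Hence every $\lambda(x_vx_wu_\alpha^2)=0$; taking $v=w$ gives $\lambda\bigl((x_wu_\alpha)^2\bigr)=0$, so $x_wu_\alpha=0$ by anisotropy over $K$, and summing over $\alpha$ yields $x_wu=0$. The odd-$d$ case is parallel, pairing against $x_{w'}x_w$ and using Corollary~\ref{c:evenisoptropy}. With this correction your route goes through; without it the argument does not close. (A minor point: in the below-the-middle case you do not need the Star Lemma to get $x_wu=0$ from $\bar u=0$; that is just $\iota(0)=0$.)
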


Note that the hypotheses imply that $\Sigma$ is a $K$-homology sphere.

\begin{lemma}
    To prove Theorem~\ref{t:weaklefschetz} , it suffices to prove the injectivity of $\ell\cdot$ for $m=\lfloor (d-1)/2\rfloor$.
\end{lemma}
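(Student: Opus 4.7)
The plan is downward induction on $m$, starting from the given case $m = m_0 := \lfloor(d-1)/2\rfloor$. For the inductive step I show that if $\ell\cdot\colon A^m(\Sigma)\to A^{m+1}(\Sigma)$ is injective for some $m$ with $1 \le m \le m_0$, then $\ell\cdot\colon A^{m-1}(\Sigma)\to A^m(\Sigma)$ is also injective; iterating down to $m = 0$ covers the whole range $m < (d-1)/2$ of Theorem~\ref{t:weaklefschetz} for both parities of $d$ (for $d$ even, $m_0 = (d-2)/2$ is the top case; for $d$ odd, $m_0 = (d-1)/2$ is one case beyond the theorem, but the theorem range $m \le m_0-1$ is still recovered on the way down).

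Let $a \in A^{m-1}(\Sigma)$ with $\ell a = 0$. The key observation is that multiplication by any $b \in A^1(\Sigma)$ commutes with multiplication by $\ell$, so $\ell(ba) = b(\ell a) = 0$. Since $ba \in A^m(\Sigma)$ and $\ell\cdot$ is injective at level $m$ by the inductive hypothesis, we conclude $ba = 0$. Applying this with $b$ the image of $x_w \in K[\Sigma]^1$ in $A^1(\Sigma)$ for each vertex $w \in \Sigma_{(0)}$ (it does not matter whether that image happens to vanish), we obtain $x_w \cdot a = 0$ in $A^*(\Sigma)$ for every vertex $w$.

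Suppose for contradiction $a \neq 0$. Poincar\'e duality for the $K$-homology sphere $\Sigma$ supplies $c \in A^{d-m+1}(\Sigma)$ with $\lambda(ac) \neq 0$. The displacement lemma (Lemma~\ref{l:displacement}) lets us write $c$ as a linear combination of square-free monomials $x_\tau$ indexed by $\tau \in \Sigma_{(d-m)}$, so some such $\tau$ must satisfy $ax_\tau \neq 0$. Since $|\tau| = d - m + 1 \geq 1$, we may pick a vertex $w \in \tau$; then $ax_\tau = (x_w a) \cdot x_{\tau \setminus \{w\}} = 0$, contradicting $ax_\tau \neq 0$. Hence $a = 0$, closing the induction. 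I do not expect a serious obstacle: the only nontrivial ingredient is the commutativity-based reduction $\ell a = 0 \Rightarrow x_w a = 0$, after which Poincar\'e duality together with the displacement lemma finishes the argument essentially mechanically.
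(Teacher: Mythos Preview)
Your proof is correct and uses the same ingredients as the paper's (Poincar\'e duality, the displacement lemma, and commutativity of $A^*(\Sigma)$). The only difference is organizational: the paper jumps directly from degree $m$ to degree $e=\lfloor(d-1)/2\rfloor$ by multiplying a nonzero $a\in A^m(\Sigma)$ by a single square-free monomial $x_\gamma$ with $|\gamma|=e-m$ (chosen as a subface of the $\eta$ produced by Poincar\'e duality), whereas you reach the same conclusion via downward induction one level at a time.
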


\begin{proof}
  Write $e=\lfloor (d-1)/2\rfloor$. If $m=e$, we are done. Otherwise, let $a\in A^m(\Sigma)$ for $m<e$. By Poincar\'{e} duality, there exists $b\in A^{d-m}(\Sigma)$ such that $ab\neq 0$. Hence there is $\eta\in \Sigma_{(d-m-1)}$ such that $ax_\eta\neq 0$. Let $\gamma$ be an $(e-m-1)$-face of $\eta$. Then, $ax_\gamma\neq 0$. Since $ax_\gamma\in A^e(\Sigma)$, 
  \[(\ell a)x_\gamma=\ell(ax_\gamma)\neq 0.\]
  Therefore, $\ell a\neq 0.$
\end{proof}

Let $S\Sigma$ be the suspension of $\Sigma$ with poles $v_+$ and $v_-$. Then, $S\Sigma$ is a simplicial homology sphere, and $\st_{S\Sigma}(v_+)=v_+\Sigma$. Let $M_{S\Sigma}$ be the generic parameter space on $S\Sigma$ over 
\[K\coloneqq k\left(\{a_{vi}\mid v\in S\Sigma_{(0)},\ i\in \{0,\dots,d\}\}\right).\]
Let $M_{\st_{S\Sigma}(v_+)}=\pi_{v_+}(M_{S\Sigma})\subseteq K[\st_{S\Sigma}(v_+)]^1$. 
Set $M_{\lk_{S\Sigma}(v_+)}=i_{v_+}^{-1}(M_{\st_{S\Sigma}(v_+)})$

We note that $\lk_{S\Sigma}(v_+)=\Sigma$, and we set $M_\Sigma\coloneqq M_{\lk_{S\Sigma}(v_+)}$ under the isomorphism $K[\lk_{S\Sigma}(v_+)]\cong K[\Sigma]$.
Now, the isomorphism 
$A^*(\st_{S\Sigma}(v_+))\to A^*(\Sigma)$ takes $x_{v_+}$ to some element $\ell\in A^1(\Sigma)$. This will be our weak Lefschetz element.

Let us write down the parameter space $M_{\Sigma}$. Note that $M_{S\Sigma}$ is spanned by $\tilde{\ell}_0,\dots,\tilde{\ell}_{d}$ where
\[\tilde{\ell}_j\coloneqq a_{v_+j}x_{v_+}+a_{v_-j}x_{v_-}+\sum_{v\in\Sigma_{(0)}} a_{vj}x_v.\]
Hence, $M_{\st_{S\Sigma}(v_+)}$ is spanned by $\{\hat{\ell}_0,\dots,\hat{\ell}_{d}\}$ where
\[\hat{\ell}_j\coloneqq a_{v_+j}x_{v_+}+\sum_{v\in\Sigma_{(0)}} a_{vj}x_v.\]
Therefore, $M_{\Sigma}=M_{\lk_{S\Sigma}(v_+)}$ is spanned by $\{\ell_1,\dots,\ell_d\}$ where
\[\ell_j\coloneqq a_{v_+0}\hat{\ell}_j-a_{v_+j}\hat{\ell}_0=\sum_{v\in\Sigma_{(0)}} (a_{v_+0}a_{vj}-a_{v_+j}a_{v0})x_v.
\]
The coefficients $c_{vj}\coloneqq a_{v_+0}a_{vj}-a_{v_+j}a_{v0}$ are algebraically independent over $k$ because 
\[\{a_{vj} \mid v\in\Sigma_{(0)},1\leq j\leq d\}\] 
is algebraically independent. Hence, $M_\Sigma$ is a regular parameter space over $K$.
Under these homomorphisms, $x_{v_+}\in A^1(\st_{S\Sigma}(v_+))$ maps to $\ell\in A^1(\Sigma)$ given by
\[\ell=-a_{v_+0}^{-1}\sum_{v\in \Sigma_{(0)}} a_{v0}x_v.\]
The injectivity of 
\[\ell\cdot \colon A^e(\Sigma)\to A^{e+1}(\Sigma)\]
is immediate from the following:

\begin{prop} \label{p:weaklefschetzsuspension} For $e=\lfloor (d-1)/2\rfloor$, multiplication map,
\[x_{v_+}\cdot\colon A^e(\st_{S\Sigma}(v_+))\to A^{e+1}(\st_{S\Sigma}(v_+))\] 
is injective.
\end{prop}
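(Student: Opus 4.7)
The plan is to combine the anisotropy of $S\Sigma$ (Corollaries~\ref{c:oddisotropy} and~\ref{c:evenisoptropy}) with the Star Lemma (Lemma~\ref{l:starlemma}) applied to $S\Sigma$ at $v_+$. Since $\Sigma$ is $(d-1)$-dimensional, $S\Sigma$ is a $d$-dimensional $K$-homology sphere, and a quick degree check shows that in both parities of $d$ the anisotropy middle degree for $S\Sigma$ is exactly $e+1$: if $d$ is odd then $(d+1)/2 = e+1$, and if $d$ is even then $d/2 = e+1$. Moreover, $\lk_{S\Sigma}(v_+) = \Sigma$ is a $K$-homology sphere, and the discussion preceding the proposition has shown that the induced parameter space $M_\Sigma$ is regular, so the hypotheses of the Star Lemma are met at $v_+$.

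I would then carry out a short bootstrap. Assume $u \in A^e(\st_{S\Sigma}(v_+))$ satisfies $x_{v_+} u = 0$. Applying the natural map $\iota\colon A^{e+1}(\st_{S\Sigma}(v_+)) \to A^{e+2}(S\Sigma)$, which multiplies by $x_{v_+}$, yields $x_{v_+}^2 u = 0$ in $A^{e+2}(S\Sigma)$. Setting $w \coloneqq x_{v_+} u \in A^{e+1}(S\Sigma)$, one then computes
\[w^2 = x_{v_+}^2 u^2 = (x_{v_+}^2 u)\cdot u = 0\]
in $A^{2e+2}(S\Sigma)$. By the anisotropy of $S\Sigma$ at degree $e+1$, this forces $w = 0$. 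Finally, since the Star Lemma also gives injectivity of $\iota\colon A^e(\st_{S\Sigma}(v_+)) \to A^{e+1}(S\Sigma)$, the vanishing $\iota(u) = w = 0$ forces $u = 0$, as required.

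The only real obstacle is conceptual: one must recognize that the correct place to invoke anisotropy is $S\Sigma$ rather than $\Sigma$, paired with the Star Lemma so that the injectivity of $x_{v_+}\cdot$ inside $\st_{S\Sigma}(v_+)$ is reduced to the easier statement of injectivity of $\iota$ into $A^*(S\Sigma)$. Once that pairing is in view, no hard calculation remains—the anisotropy corollaries and the Star Lemma do all the heavy lifting, and the degree bookkeeping (verifying that $e+1$ is the middle degree of $S\Sigma$) is the only thing to check carefully.
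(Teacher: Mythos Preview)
Your argument is correct and is essentially the paper's own proof: both apply the Star Lemma at $v_+$ to get the injective map $\iota\colon A^e(\st_{S\Sigma}(v_+))\to A^{e+1}(S\Sigma)$, compute $\iota(u)^2 = x_{v_+}^2 u^2 = 0$ from the hypothesis $x_{v_+}u=0$, and then invoke anisotropy of $S\Sigma$ in degree $e+1$ to conclude $\iota(u)=0$, hence $u=0$. The only cosmetic difference is that the paper explicitly lifts $u$ to $\tilde{a}\in K[\Sigma]^e\subset K[S\Sigma]$ before squaring in $A^*(S\Sigma)$, whereas you leave that identification implicit when writing $(x_{v_+}^2 u)\cdot u$.
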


\begin{proof}
    Let $a\in A^e(\st_{S\Sigma}(v_+))$ with $x_{v_+}a=0.$ Using the isomorphisms 
    \[A^e(\st_{S\Sigma}(v_+))\cong A^e(\lk_{S\Sigma}(v_+))=A^e(\Sigma),\]
    we may suppose that $a$ is the image of $\tilde{a}\in K[\Sigma]^e$ under the quotient $K[\Sigma]\to A^*(\Sigma)$. We will identify $\tilde{a}$ with its image in $K[S\Sigma]$ under the homomorphism $i\colon K[\Sigma]\to K[S\Sigma]$ (which exists because $\Sigma$ is an induced subcomplex of $S\Sigma$).

  Since $M_{\Sigma}\subset K[\Sigma]^1$ is a regular parameter space, by Lemma~\ref{l:starlemma}, multiplication by $x_{v_+}$ induces an injective $K[S\Sigma]$-homomorphism, 
  \[\iota\colon A^e(\st_{S\Sigma}(v_+))\to A^{e+1}(S\Sigma).\]
  Now, 
  \[\iota(a)^2=x_{v_+}^2\tilde{a}^2=\iota(x_{v_+}a^2)=\iota(0)=0.\]
  By Corollary~\ref{c:oddisotropy} or Corollary~\ref{c:evenisoptropy}, $\iota(a)=0$. By the injectivity of $\iota$, $a=0$. Therefore, multiplication by $x_{v_+}$ is injective.
\end{proof}
    
\begin{remark}
Note that multiplication by $x_{v_+}$ is given by a sort of push-pull,
\[A^*(\st_{S\Sigma}(v_+))\to A^{*+1}(S\Sigma)\to A^{*+1}(\st_{S\Sigma}(v_+)),\]
reminiscent of Deligne's and originally, Lefschetz's arguments for proving the Hard Lefschetz theorem (see \cite{Deligne:Weil2,Lamotke}). It would be interesting to find combinatorial analogues of their approaches.    
\end{remark}

\section{Strong Lefschetz}

In this section, we deduce Strong Lefschetz from Weak Lefschetz. Again, we work with a generic linear parameter space for $K$ over $k$, a field of characteristic $2$.

\begin{theorem} \label{t:hardlefschetzeven}
    Let $\Sigma$ be a $(d-1)$-dimensional $k$-homology sphere for $d$ even. Then, there exists an element $\ell\in A^1(\Sigma)$ such that for $m\leq d/2$, the multiplication map $\ell^{d-2m}\cdot \colon A^m(\Sigma)\to A^{d-m}(\Sigma)$ is injective.
\end{theorem}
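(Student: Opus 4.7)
The plan is to reuse the weak Lefschetz element $\ell\in A^1(\Sigma)$ furnished by Theorem~\ref{t:weaklefschetz} and show that it is already a strong Lefschetz element, by combining iterated weak Lefschetz injectivity with the middle-degree anisotropy statement (Corollary~\ref{c:oddisotropy}). The conceptual point is that anisotropy at degree $d/2$ is already a Lefschetz-type nonvanishing, so once one can injectively pass from $A^m$ up to $A^{d/2}$, the rest is automatic.

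First I would dispose of the boundary case $m=d/2$, where $\ell^{d-2m}\cdot$ is the identity. For $0\le m<d/2$, Theorem~\ref{t:weaklefschetz} gives that $\ell\cdot\colon A^i(\Sigma)\to A^{i+1}(\Sigma)$ is injective whenever $i<(d-1)/2$; since $d$ is even, this is exactly the range $0\le i\le d/2-1$. Composing the $d/2-m$ injections
\[
A^m(\Sigma)\xrightarrow{\ell\cdot} A^{m+1}(\Sigma)\xrightarrow{\ell\cdot}\cdots\xrightarrow{\ell\cdot}A^{d/2}(\Sigma),
\]
I conclude that $\ell^{d/2-m}\cdot\colon A^m(\Sigma)\to A^{d/2}(\Sigma)$ is injective.

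The key step is now short: fix nonzero $u\in A^m(\Sigma)$. By the previous paragraph, $\ell^{d/2-m}u\in A^{d/2}(\Sigma)$ is nonzero, so anisotropy (Corollary~\ref{c:oddisotropy}) applied in $A^{d/2}(\Sigma)$ yields
\[
0\ne \bigl(\ell^{d/2-m}u\bigr)^{2}=\ell^{d-2m}u^{2}=u\cdot\bigl(\ell^{d-2m}u\bigr)\in A^{d}(\Sigma).
\]
In particular $\ell^{d-2m}u\ne 0$ in $A^{d-m}(\Sigma)$, so $\ell^{d-2m}\cdot\colon A^m(\Sigma)\to A^{d-m}(\Sigma)$ is injective, completing the proof.

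There is no real obstacle; all the heavy lifting has been done in earlier sections. The only thing to be careful about is matching the strict inequality $m<(d-1)/2$ in Theorem~\ref{t:weaklefschetz} with the inequality $i\le d/2-1$ needed to iterate up to the middle degree, but for $d$ even these coincide. The same argument would not quite work verbatim for odd $d$, because there the middle anisotropy statement (Corollary~\ref{c:evenisoptropy}) lives in degree $(d-1)/2$ rather than in the top-half middle, which is why the odd-$d$ case is treated separately.
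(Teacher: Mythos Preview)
Your proof is correct and follows essentially the same argument as the paper's own proof: take the weak Lefschetz element $\ell$ from Theorem~\ref{t:weaklefschetz}, iterate to reach the middle degree, and then invoke anisotropy (Corollary~\ref{c:oddisotropy}) to conclude $0\neq(\ell^{d/2-m}a)^2=a\cdot(\ell^{d-2m}a)$. Your added remarks on the boundary case $m=d/2$ and on why the inequality $i<(d-1)/2$ coincides with $i\le d/2-1$ for even $d$ are accurate and make explicit what the paper leaves implicit.
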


\begin{proof}
    Let $\ell$ be given by Theorem~\ref{t:weaklefschetz}. Let $a\in A^m(\Sigma)$ be nonzero.    
    By iterating weak Lefschetz, $\ell^{d/2-m}a\in A^{d/2}(\Sigma)$ is nonzero, and hence, by anisotropy
    \[0\neq (\ell^{d/2-m}a)^2=a(\ell^{d-2m}a).\]
    Consequently, $\ell^{d-2m}a\neq 0$.    
\end{proof}

\begin{theorem} \label{t:hardlefschetzodd}
    Let $\Sigma$ be a $(d-1)$-dimensional $k$-homology sphere for $d$ odd. Then, there exists an element $\ell\in A^1(\Sigma)$ such that for $m\leq (d-1)/2$, the multiplication map $\ell^{d-2m}\cdot \colon A^m(\Sigma)\to A^{d-m}(\Sigma)$ is injective.
\end{theorem}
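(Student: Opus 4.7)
The plan is to mimic the proof of Theorem~\ref{t:hardlefschetzeven} but to handle the parity mismatch of the odd-dimensional case by lifting to the suspension $S\Sigma$, where the even-dimensional anisotropy from Corollary~\ref{c:oddisotropy} applies in the middle degree.

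I would take $\ell \in A^1(\Sigma)$ exactly as constructed in the proof of Theorem~\ref{t:weaklefschetz}: under the cone isomorphism $\phi\colon A^*(\Sigma) \xrightarrow{\sim} A^*(\st_{S\Sigma}(v_+))$ of Lemma~\ref{l:conelemma}, $\ell$ corresponds to $x_{v_+}$. Given a nonzero $\bar a \in A^m(\Sigma)$ with $m \leq (d-1)/2$, set $a = \phi(\bar a)$. Iterating weak Lefschetz $((d-1)/2 - m)$ times shows $\ell^{(d-1)/2 - m}\bar a \in A^{(d-1)/2}(\Sigma)$ is nonzero, so $x_{v_+}^{(d-1)/2 - m}a \in A^{(d-1)/2}(\st_{S\Sigma}(v_+))$ is nonzero. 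Applying the injective star inclusion $\iota\colon A^*(\st_{S\Sigma}(v_+)) \to A^{*+1}(S\Sigma)$ from Lemma~\ref{l:starlemma} produces a nonzero
\[
b \coloneqq \iota\bigl(x_{v_+}^{(d-1)/2 - m}a\bigr) = x_{v_+}^{(d+1)/2 - m}a \;\in\; A^{(d+1)/2}(S\Sigma).
\]

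Next, I would exploit that $S\Sigma$ is $d$-dimensional, i.e., a $(d'-1)$-dimensional homology sphere with $d' = d + 1$ \emph{even}, so $(d+1)/2 = d'/2$ is the middle degree of $A^*(S\Sigma)$. By Corollary~\ref{c:oddisotropy} applied to $S\Sigma$, $b^2 \neq 0$ in $A^{d+1}(S\Sigma)$. Rewriting
\[
b^2 = x_{v_+}^{d+1-2m}a^2 = \iota\bigl(x_{v_+}^{d-2m}a^2\bigr)
\]
and using the injectivity of $\iota$ on $A^d(\st_{S\Sigma}(v_+))$, I would conclude $x_{v_+}^{d-2m}a^2 \neq 0$, and hence $\ell^{d-2m}\bar a^{\,2} \neq 0$ in $A^d(\Sigma)$ via $\phi^{-1}$. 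Since $\ell^{d-2m}\bar a^{\,2} = \bar a \cdot (\ell^{d-2m}\bar a)$, this forces $\ell^{d-2m}\bar a \neq 0$, as required.

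The main obstacle is the parity mismatch: for $d$ odd, squaring in the middle degree $A^{(d-1)/2}(\Sigma)$ lands in $A^{d-1}(\Sigma)$, one degree short of the top, so Corollary~\ref{c:evenisoptropy} applied directly to $\Sigma$ does not supply the nonvanishing in $A^d(\Sigma)$ needed to close the argument. Passing to $S\Sigma$ resolves this by absorbing the missing factor of $\ell$ into the degree-raising map $\iota = x_{v_+}\cdot$; squaring in the \emph{suspension's} middle degree then reaches the top $A^{d+1}(S\Sigma)$, and the injectivity of $\iota$ transports the nonvanishing back down to $A^d(\Sigma)$.
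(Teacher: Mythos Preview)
Your proposal is correct and follows essentially the same route as the paper's proof: lift to the suspension so that the middle degree becomes $(d+1)/2$ in the even-dimensional sphere $S\Sigma$, apply Corollary~\ref{c:oddisotropy} there, and descend. The only cosmetic differences are that the paper works with an explicit lift $\tilde a\in K[\Sigma]$ viewed inside $K[S\Sigma]$ (so that expressions like $x_{v_+}^{d+1-2m}\tilde a^2$ live unambiguously in $A^*(S\Sigma)$), and it reads off $\ell^{d-2m}a\neq 0$ directly from $\tilde a\cdot\iota(\ell^{d-2m}a)\neq 0$ rather than first pulling back through $\phi^{-1}$; your invocation of the injectivity of $\iota$ at degree $d$ is valid but in fact unnecessary, since $\iota(c)\neq 0$ already forces $c\neq 0$.
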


\begin{proof}
    Let $S\Sigma$ be the suspension of $\Sigma$. Pick $M_{S\Sigma}$, $M_{\st_{S\Sigma}(v_+)}$, and $M_{\Sigma}$ as in the previous section. Pick $\ell$ as in Theorem~\ref{t:weaklefschetz}. 
    Let $a\in A^m(\Sigma)\cong A^m(\st_{S\Sigma}(v_+))$ be nonzero. Pick $\tilde{a}\in K[\Sigma]^m$ lifting $a$ as in Proposition~\ref{p:weaklefschetzsuspension}. 
    Under
    \[\iota\colon A^e(\Sigma)\cong A^e(\st_{S\Sigma}(v_+))\to A^{e+1}(S\Sigma),\]
    $\iota(\ell^p a)=x_v^{p+1}\tilde{a}$
    for any positive integer $p$.
    Now, by iterating weak Lefschetz, 
    \[\ell^{(d-1)/2-m}a\neq 0,\] and so by Lemma~\ref{l:starlemma},
    \[x_v^{(d+1)/2-m}\tilde{a}=\iota(\ell^{(d-1)/2-m}a)\in A^{(d+1)/2}(S\Sigma)\]
    is  also nonzero.
    Consequently, by anisotropy,
    \[0\neq (x_{v_+}^{(d+1)/2-m}\tilde{a})^2=\tilde{a}(x_{v_+}^{d+1-2m}\tilde{a})
    =\tilde{a}(\iota(\ell^{d-2m}a)).\]
    Therefore, $\ell^{d-2m}a\neq 0$ in $A^*(\Sigma)$.
\end{proof}

\bibliographystyle{plain}
\bibliography{references}

\end{document}